% Revision 31.03.21
\documentclass[11pt,reqno]{amsart}
\usepackage[utf8]{inputenc}
\usepackage{amssymb,amsmath}
\usepackage{mathrsfs}
\usepackage[T1]{fontenc}
\usepackage{lmodern} \normalfont %to load T1lmr.fd
\DeclareFontShape{T1}{lmr}{bx}{sc} { <-> ssub * cmr/bx/sc }{}
\usepackage{microtype}
\usepackage{graphicx}
\usepackage{color}
\usepackage[dvipsnames]{xcolor}
\usepackage{tikz}
\usetikzlibrary{shapes,positioning}
\usepackage{pgfplots}
\pgfplotsset{compat=newest}
\usepackage{enumitem}
\usepackage{setspace}
\usepackage{mathtools}
\usepackage{xfrac}

\usepackage{multicol}
\setlength{\multicolsep}{3.0pt plus 1.0pt minus 0.75pt}% 50% of original values
\usepackage{soul}
\usepackage{subcaption}
\usepackage{booktabs}
\usepackage{array}
\newcolumntype{L}[1]{>{\raggedright\let\newline\\\arraybackslash\hspace{0pt}}m{#1}}
\newcolumntype{C}[1]{>{\centering\let\newline\\\arraybackslash\hspace{0pt}}m{#1}}
\newcolumntype{R}[1]{>{\raggedleft\let\newline\\\arraybackslash\hspace{0pt}}m{#1}}

\usepackage[hidelinks]{hyperref}
\usepackage[nameinlink,noabbrev]{cleveref}
\newtheorem{theorem}{Theorem}
\newtheorem{lemma}[theorem]{Lemma}

\theoremstyle{definition}
\newtheorem{definition}[theorem]{Definition}
\newtheorem{example}[theorem]{Example}
\theoremstyle{remark}
\newtheorem{remark}[theorem]{Remark}

\newtheorem{assumption}[theorem]{Assumption}
\Crefname{assumption}{Assumption}{Assumptions}
\numberwithin{theorem}{section}
%\numberwithin{equation}{section}
\numberwithin{table}{section}
\numberwithin{figure}{section}
%
%% colors
\definecolor{myBlue}{RGB}{30,144,255} % dodger blue
\definecolor{myGreen}{RGB}{69,169,0} % chatreuse
\definecolor{myRed}{RGB}{165,12,42}
\definecolor{myOrange}{RGB}{225,92,22}
%
% color for tikz images
\definecolor{color0}{rgb}{0.12156862745098,0.466666666666667,0.705882352941177}
\definecolor{color1}{rgb}{1,0.498039215686275,0.0549019607843137}
\definecolor{color2}{rgb}{0.172549019607843,0.627450980392157,0.172549019607843}
\definecolor{color3}{rgb}{0.83921568627451,0.152941176470588,0.156862745098039}
\definecolor{color4}{rgb}{0.580392156862745,0.403921568627451,0.741176470588235}
\definecolor{color5}{rgb}{0,0,0}

\newcommand{\delete}[1]{ }

% Spaces

\def\R{\mathbb{R}}

\def\TT{\mathbb{T}}
% integrals

\newcommand\dx{\,\text{d}x}

% derivatives, embeddings
\newcommand{\ddt}{\ensuremath{\frac{\mathrm{d}}{\mathrm{d}t}} }

% math operators

% mathcals

\newcommand{\calA}{\ensuremath{\mathcal{A}} }

\newcommand{\calB}{\ensuremath{\mathcal{B}} }

\newcommand{\calC}{\ensuremath{\mathcal{C}} }
\newcommand{\calD}{\ensuremath{\mathcal{D}} }

\newcommand{\cHV}{\ensuremath{\mathcal{H}_{\scalebox{.5}{\V}}}}
\newcommand{\cHQ}{\ensuremath{\mathcal{H}_{\scalebox{.5}{\Q}}}}

\newcommand{\calI}{\ensuremath{\mathcal{I}} }
\newcommand{\calK}{\ensuremath{\mathcal{K}} }
\newcommand{\calM}{\ensuremath{\mathcal{M}} }

\newcommand{\Q}{\ensuremath{\mathcal{Q}} }

\newcommand{\calR}{\ensuremath{\mathcal{R}} }

\newcommand{\calY}{\ensuremath{\mathcal{Y}} }
\newcommand{\V}{\ensuremath{\mathcal{V}}}

%%%%% specific things for Poro-PH paper %%%%%

%\newcommand{\calBB}{\ensuremath{\overline\calB_\beta}}

\newcommand{\calDD}{\ensuremath{\overline\calD}}
\newcommand{\calKK}[1]{\ensuremath{\overline\calK_{#1}}}
\newcommand{\calMM}{\ensuremath{\overline\calM}}
\newcommand{\state}{z}
\textheight=215mm
\textwidth=150mm
\evensidemargin=30.0mm
\oddsidemargin=30.0mm
\topmargin=-1mm
\hoffset=-25.4mm

\definecolor{corrColor}{RGB}{60,124,155}

\newcommand{\inpVar}{v}
\newcommand{\outVar}{y}

\newcommand{\Id}{\mathrm{Id}}
\allowdisplaybreaks
\begin{document}
\title{Port-Hamiltonian formulations of poroelastic network models}
\author[]{R.~Altmann$^\dagger$, V.~Mehrmann$^{\ddagger}$, B.~Unger$^{\star}$}
\address{${}^{\dagger}$ Department of Mathematics, University of Augsburg, Universit\"atsstr.~14, 86159 Augsburg, Germany}
\email{robert.altmann@math.uni-augsburg.de}
\address{${}^{\ddagger}$ Institute of Mathematics MA\,{}4-5, Technical University Berlin, Stra\ss e des 17.~Juni 136, 10623 Berlin, Germany}
\email{mehrmann@math.tu-berlin.de}
\address{${}^{\star}$ Stuttgart Center for Simulation Science (SC SimTech), University of Stuttgart, Universitätsstr.~32, 70569 Stuttgart, Germany}
\email{benjamin.unger@simtech.uni-stuttgart.de}
%\thanks{}
%
\date{\today}
%\keywords{poroelastic network model, port-Hamiltonian system, differential-algebraic operator equation, dissipative Hamiltonian system}
%
%
%=============================================================================
%=========  Abstract
%=============================================================================
\begin{abstract}
We investigate an energy-based formulation of the two-field poroelasticity model and the related multiple-network model as they appear in geosciences or medical applications.
We propose a port-Hamiltonian formulation of the system equations, which is beneficial for preserving important system properties after discretization or model-order reduction. For this, we include the commonly omitted second-order term and consider the corresponding first-order formulation. The port-Hamiltonian formulation of the quasi-static case is then obtained by (formally) setting the second-order term zero. 
Further, we interpret the poroelastic equations as an interconnection of a network of submodels with internal energies, adding a control-theoretic understanding of the poroelastic equations.
\end{abstract}
%
%
%=============================================================================
%=========  Title / Contents
%=============================================================================
\maketitle
{\tiny \textbf{Key words.} poroelastic network model, port-Hamiltonian system, differential-algebraic operator equation, dissipative Hamiltonian system}\\
\indent
{\tiny \textbf{AMS subject classifications.}  \textbf{93A30}, \textbf{65L80}, \textbf{76S05},  \textbf{93B52} }
%
%65M12 = Stability and convergence of numerical methods PDEs
%65L80 = Numerical methods for differential-algebraic equations
%65M60 = Finite element, Rayleigh-Ritz and Galerkin methods for initial value and initial-boundary value problems involving PDEs
%76S05 = Flows in porous media; filtration; seepage
%93A30 = Mathematical modeling (models of systems, model-matching, etc.)
%93B52 = Feedback control
%
%
%=============================================================================
%=========  Introduction
%=============================================================================
\section{Introduction}
The port-Hamiltonian (pH) framework constitutes an energy-based model paradigm that offers a systematic approach for the interactions of (physical) systems with each other and with the environment and thus extends Hamiltonian systems to open physical systems. The pH structure, introduced originally in~\cite{MasS92}, see \cite{RasCSS20,SchJ14} for an overview, provides a geometric description of the model in terms of a Dirac structure, which directly encodes system properties such as passivity and stability into the system of equations. Structure-preserving methods, such as space-discretization~\cite{Egg19,SerMH19}, model-order reduction~\cite{BeaG11,ChaBG16,GugPBS09,PolS11,WolLEK10}, and time-integration~\cite{KotL18,MehM19}, ensure the preservation of these properties throughout numerical approximation schemes. Recently, the definition of pH systems was extended to cover implicit systems~\cite{BeaMXZ18,MehM19,Sch13,SchM18}, resulting in so-called \emph{port-Hamiltonian differential-algebraic equations} (pH-DAEs). Although this system class covers a wide range of equations and simplifies the mathematical theory in many aspects, almost all results are obtained for finite-dimensional DAEs. Extensions of the pH framework to (constrained) infinite-dimensional systems typically do not exist in a general form but rather consider particular applications or special model classes~\cite{AltS17,Egg19,JavZ12,Kot19,MacVM04a,Ram19}.

In view of extending the range of application classes, this paper is devoted to the generation and analysis of the pH structure for partial differential equations which model the deformation of porous media saturated by an incompressible viscous fluid, namely \emph{poroelasticity}~\cite{Biot41,DetC93,EggS19,Sho00}. These equations represent coupled systems of different types of differential equations and, hence, form a so-called \emph{partial differential-algebraic equation} (PDAE) in total~\cite{AltMU20}. The main interest in terms of applications can be found in the field of geomechanics~\cite{Zob10}, often in connection with heterogeneous structures~\cite{AltCMPP20,BroV16a,FuACMPP19}. Moreover, also the displacement of a material due to temperature changes can be modeled with a mathematically equivalent model called thermoelasticity, cf.~\cite{Bio56,MalP17}. 	

The full strength of the pH approach, however, becomes visible in the network case, including multiple pressure variables. Such models typically appear in medical applications such as cerebral infusion tests~\cite{Eis12,SobEWC12} or the investigation of cerebral edema~\cite{VarCTHLTV16}. In both cases, the brain is modeled as a poroelastic medium saturated by different fluids. The formulation as a pH system then allows an interpretation as a network of submodels with internal energies and interconnections.
	
This paper is structured as follows. After introducing the classical two-field formulation of poroelasticity in terms of a weak operator formulation in \Cref{sec:model}, we discuss its extension to the network case. Although the system equations are quasi-static, we include the usually omitted second-order term to find a pH formulation. In \Cref{sec:PH} we recall the finite-dimensional pH-DAE framework and show that the poroelastic equations can be cast in a corresponding infinite-dimensional structure.
In \Cref{sec:interconn} we then derive the pH structure from an interconnection point of view. This means that we already consider the original poroelastic system as a coupled system of pH subsystems. Finally, we summarize the paper in \Cref{sec:conclusion}.
%
%
%=============================================================================
%=========  Poroelastic Network Models
%=============================================================================
\section{Poroelastic Network Models}\label{sec:model}
In this section, we introduce the model equations of poroelasticity and a corresponding operator formulation, which corresponds to the weak form. In view of the subsequent reformulations, it turns out that the full model is better suited than the often considered quasi-static form.
This model is then extended by additional pressure variables leading to the multiple-network case.
%
%
%%%%%%%%%%%%%%%%%%%%%%%%%%%%%%
\subsection{Two-field formulation}\label{sec:model:two}
The original model of linear poroelasticity in a bounded Lipschitz domain $\Omega \subseteq \mathbb{R}^d$ with $d\in\{2,3\}$ (and boundary $\partial \Omega$) was introduced in~\cite{Biot41}, see \cite{Sho00} for a modern formulation covering different modeling components. Within this model, the unknown displacement field and the pressure are averaged quantities across (infinitesimal) cubic elements. Hence, both variables can be treated as variables on the entire domain~$\Omega$. Considering a time horizon~$0<T<\infty$ and setting $\TT \vcentcolon=[0,T]$, one wants to determine the displacement field~$u\colon\TT \times \Omega \rightarrow \mathbb{R}^d$ and the pressure~$p\colon\TT \times \Omega \rightarrow \mathbb{R}$ satisfying the system of coupled partial differential equations
\begin{subequations}
\label{eqn:poro:strong}
\begin{align}
	\rho\, \partial_{tt} u	-\nabla \cdot \big( \sigma (u) \big) + \nabla (\alpha p) &= \hat{f}\phantom{\hat{g}} \quad\ \text{in } (0,T] \times \Omega, \label{eqn:poro:strong:a}\\
	\partial_t\Big(\alpha \nabla \cdot u + \frac{1}{M} p \Big)- \nabla \cdot \Big( \frac{\kappa}{\nu} \nabla p \Big) &= \hat{g}\phantom{\hat{f}} \quad\ \text{in } (0,T] \times \Omega, \label{eqn:poro:strong:b}
\end{align}
which describe the \emph{balance of momentum}~\eqref{eqn:poro:strong:a} and the \emph{conservation of mass} of the fluid~\eqref{eqn:poro:strong:b}. Within this system, the stress tensor $\sigma$ models the linear elastic stress-strain constitutive relation
\begin{equation*}
  \sigma(u) = 2\mu\, \varepsilon (u) + \lambda\, (\nabla \cdot u)\, \mathcal{I}, \qquad
  \varepsilon (u) = \tfrac{1}{2}\, \big(\nabla u + (\nabla u)^T \big)
\end{equation*}
with the Lam\'e coefficients $\mu$ and $\lambda$ and the identity tensor~$\mathcal{I}$.
Further, $\alpha$ denotes the Biot-Willis fluid-solid coupling coefficient, $M$ the Biot modulus, $\kappa$ the permeability, $\rho$ the density, and~$\nu$ the fluid viscosity. The right-hand side~$\hat{g}$ represents an injection or production process and~$\hat{f}$ denotes the volume-distributed external forces. In what follows we will interpret $\hat{g}$ and $\hat{f}$ as distributed inputs to the system. 

The system is equipped with Dirichlet boundary conditions
\begin{align}
	u &= \hat u_b\phantom{p_b} \quad\ \text{on } (0,T] \times \partial \Omega, \label{eqn:poro:strong:c}\\
	p &= \hat p_b\phantom{u_b} \quad\ \text{on } (0,T] \times \partial \Omega\phantom{,} \label{eqn:poro:strong:d}
\end{align}
\end{subequations}
and initial conditions~$p(\,\cdot\,,0) = p^0$, $u(\,\cdot\,,0) = u^0$, as well as~$\partial_t u(\,\cdot\,,0) = \dot{u}^0$
%

% weak/operator form
For the weak formulation of the poroelastic model, we assume homogeneous boundary conditions, i.e., we assume that~$\hat u_b$ and $\hat p_b$ are zero. Inhomogeneous boundary conditions will be briefly discussed in~\Cref{rem:Dirichlet}.
Let us introduce the spaces
\begin{equation*}
  \V\vcentcolon=[H^1_0(\Omega)]^d, \qquad
  \cHV\vcentcolon=[L^2(\Omega)]^d, \qquad
  \Q\vcentcolon=H^1_0(\Omega), \qquad
  \cHQ\vcentcolon=L^2(\Omega),
\end{equation*}
where $L^2(\Omega)$ is the space of square-Lebesgue integrable functions on $\Omega$ and $H^1_0(\Omega)$ is the Hilbert space of functions that have a weak derivative in $L^2(\Omega)$ and satisfy zero boundary conditions on
$\partial \Omega$. Hence, $\V$ and $\Q$ already include the respective homogeneous boundary conditions.
With the corresponding dual spaces~$\V^*$ and~$\Q^*$ we have two Gelfand triples at hand, namely~$\V, \cHV, \V^*$ and~$\Q, \cHQ, \Q^*$. Recall that this means that~$\V$ is continuously and densely embedded in~$\cHV$, which in turn implies a continuous and dense embedding from~$\cHV \cong \cHV^*$ to $\V^*$, cf.~\cite[Ch.~23.4]{Zei90a}. For both triples we will use the notion~$\langle\, \cdot\,,\,\cdot\,\rangle$ for the respective duality pairing.
The resulting embedding is denoted by~$\calI\colon \Q\to\Q^*$, $\langle\calI\, \cdot\,,\,\cdot\,\rangle = (\, \cdot\,,\,\cdot\,)_{\cHQ}$.

% operators
We introduce the operators~$\calY\colon\cHV\to\cHV^*$, $\calM\colon\cHQ\to\cHQ^*$ by
\begin{align*}
\langle\calY u,v\rangle \vcentcolon= \int_\Omega \rho\,  u\, v \dx, \qquad
\langle\calM p,q\rangle \vcentcolon= \int_\Omega \frac{1}{M}\, p\, q \dx
\end{align*}
as well as the differential operators $\calA\colon\V\to\V^*$, $\calK\colon\Q\to\Q^*$, and $\calD\colon\V\to\cHQ^*$ by
\begin{gather*}
  \langle\calA u,v\rangle \vcentcolon= \int_\Omega \sigma(u) : \varepsilon(v) \dx, \quad
  \langle\calK p,q\rangle \vcentcolon= \int_\Omega \frac{\kappa}{\nu}\, \nabla p \cdot \nabla q \dx,\quad
  \langle\calD u,q\rangle \vcentcolon= \int_\Omega \alpha\, (\nabla \cdot u)\, q \dx,
\end{gather*}
where we use for $\calA$ the classical double dot notation from continuum mechanics.
Note that, due to the integration by parts formula, the operator~$\calD$ can also be considered in the form~$\calD\colon\cHV\to\Q^*$. 
To obtain the weak formulation, equation~\eqref{eqn:poro:strong:a} is multiplied by a test function $v\in\V$ and~\eqref{eqn:poro:strong:b} by a test function $q\in\Q$. Integration over the computational domain~$\Omega$ then induces the appearance of the previously defined operators. Accordingly, integrals of the right-hand sides multiplied by a test function arise. For this, we introduce~$f(t) \vcentcolon= \int_\Omega \hat f(t)\, \cdot \dx \in\cHV^*$ and~$g(t) \vcentcolon= \int_\Omega \hat g(t)\, \cdot \dx \in \cHQ^*$.
% comment: rhs in L2 passend zu Existenz-Theorie in [Wloka, Ch. V]  bzw [LioM72 Ch. 3 Sect. 8]

It follows from Korn's inequality~\cite[Th.~6.3.4]{Cia88} that the operator~$\calA$ is elliptic and, thus, invertible. Further, it is self-adjoint and bounded in~$\V$. Similarly, the operator~$\calK$ is self-adjoint, elliptic, and bounded in~$\Q$. The operator~$\calM$ mainly contains the multiplication by the constant~$1/M>0$ and is thus self-adjoint, elliptic, and bounded in the pivot space~$\cHQ$, although it may also be interpreted as an operator~$\calM\colon \Q\to\Q^*$. In the latter setting, however, the operator is only continuous and not elliptic.
Similarly, the operator~$\calY$ represents the multiplication with the constant~$\rho>0$ and thus is self-adjoint, elliptic, and bounded in~$\cHV$.
Finally, the coupling operator~$\calD$ equals, up to the multiplicative prefactor~$\alpha$, the divergence and is thus bounded.

The weak form of~\eqref{eqn:poro:strong} can be stated  as follows: Determine~$u\colon\TT \to \V$ and~$p\colon\TT \to \Q$ such that for almost all $t\in(0,T)$ we have
\begin{subequations}
\label{eqn:twoField:operator}
\begin{alignat}{5}
   \calY \ddot {u}(t)\ &\ &\ +\ &\calA u(t) &\ -\ &\calD^* p(t) &\ =\ &f(t) &&\qquad \text{in } \V^*, \label{eqn:twoField:operator:a}\\
   & \calD \dot{u}(t) &\ +\ & \calM\dot{p}(t) &\ +\ &\calK p(t) &\ =\ &g(t) &&\qquad \text{in } \Q^*  \label{eqn:twoField:operator:b}
\end{alignat}
\end{subequations}
for initial conditions~$p^0\in\cHQ$, $u^0\in\V$, and~$\dot{u}^0 \in \cHV$.
Note that~$\calD^*\colon\cHQ\to\V^*$ denotes the dual operator of~$\calD$, i.e., $\langle \calD^* q, v\rangle = \langle q, \calD v\rangle$ for all~$v\in\V$ and $q\in \Q$.
Assuming right-hand sides~$f \in L^2(\TT;\cHV)$, $g\in L^2(\TT;\cHQ)$, suitable solution spaces (for the weak formulation) read~$u\in L^2(\TT;\V)$, $p\in L^2(\TT;\Q)$ with~$\dot{u}\in L^2(\TT;\cHV)$, $\ddot{u}\in L^2(\TT;\V^*)$, and~$\dot{p}\in L^2(\TT;\Q^*)$.
Sobolev embeddings and the existence theory of~\cite[Ch.~3, Sect.~8.4]{LioM72} then imply that~$p\in \calC(\TT;\cHQ)$ and~$u\in \calC(\TT;\V)$, $\dot{u}\in \calC(\TT;\cHV)$ such that initial conditions are meaningful.

Within this paper, we will study system~\eqref{eqn:twoField:operator} in the corresponding formulation in terms of operator matrices, i.e.,
\begin{align}
\label{eqn:twoField:opMatrix}
\begin{bmatrix} \calY & 0\\ 0 & 0 \end{bmatrix}
  \begin{bmatrix} \ddot{u}\\ \ddot{p} \end{bmatrix}+
  \begin{bmatrix} 0 & 0\\ \calD & \calM \end{bmatrix}
  \begin{bmatrix} \dot{u}\\ \dot{p} \end{bmatrix}
 +
  \begin{bmatrix} \calA & -\calD^*\\ 0 & \calK \end{bmatrix}
  \begin{bmatrix} u\\ p \end{bmatrix}
  = \begin{bmatrix} f\\ g \end{bmatrix}.
\end{align}
Besides the second-order formulation presented here, we will study an associated first-order formulation by introducing a new variable for $\dot u$. This will be investigated together with the pH formulation of the model equations in \Cref{sec:PH:twoField}. As a result, we study~\eqref{eqn:twoField:opMatrix} as two systems that are interconnected via power-conserving feedback in \Cref{sec:interconn}, giving a system-theoretic interpretation to these equations. 
\begin{remark}
	\label{rem:poroNonlinear}
	In several applications, see for instance \cite{Car39}, the permeability~$\kappa$, or more precisely, the hydraulic conductivity, depends on the dilatation~$\nabla\cdot u$, i.e., $\kappa = \kappa(\nabla\cdot u)$. In this case, the term~$\calK p$ becomes nonlinear. Assuming that there exist positive constants $\kappa_-,\kappa_+$ satisfying $0 < \kappa_- \leq \kappa(\xi) \leq \kappa_+$ for all $\xi\in\R$ still renders the operator $\calK(u)$ self-adjoint and elliptic, which is sufficient for the forthcoming analysis. For instance, this is the case for the Kozney-Carmen type hydraulic conductivity~\cite{CaoCM13,Car39}, see also~\cite{AltMU20b} for a corresponding numerical setup.
\end{remark}
%
%
%%%%%%%%%%%%%%%%%%%%%%%%%%%%%%
\subsection{Quasi-static formulation}\label{sec:model:twoQS}
In many applications, the coefficient $\rho$ in \eqref{eqn:poro:strong:a} is considered small enough so that the second order term $\rho \ddot u$ can  be neglected. This  leads to the so-called \emph{quasi-static formulation}, studied, for instance, in~\cite{Egg19,Sho00}. Note that a detailed asymptotic analysis considering different orders in small terms may lead to a different formulation than just formally considering $\rho \ddot u = 0$. 

In view of the operator formulation~\eqref{eqn:twoField:operator}, the quasi-static model is given by
\begin{subequations}
	\label{eqn:twoFieldQS}
	\begin{align}
		\label{eqn:twoFieldQS:elliptic}\calA u - \calD^* p &= f \quad\ \text{in } \V^*,\\
		\label{eqn:twoFieldQS:parabolic}\calD \dot{u} + \calM \dot{p} + \calK p &= g \quad\ \text{in } \Q^*.
	\end{align}
\end{subequations}	
\begin{remark}
	\label{rem:consistentInitialValue}
	For the quasi-static formulation~\eqref{eqn:twoFieldQS} there is no need to prescribe an initial value for $\dot{u}$. In fact, it it not even necessary to prescribe an initial value for $u$, since~$u^0$ is uniquely determined from~\eqref{eqn:twoFieldQS:elliptic}, see for instance~\cite{AltMU20} and the forthcoming \Cref{sec:PH:DAE}.
	The dependence of~$u^0$ on~$p^0$ is a typical \emph{consistency condition} known for PDAEs.
\end{remark}
If $f$ is differentiable in time, then, since the operator $\calA$ is invertible, we can formally solve~\eqref{eqn:twoFieldQS:elliptic} for $u$ and insert the result into~\eqref{eqn:twoFieldQS:parabolic}. Then one obtains a parabolic equation for the pressure $p$ given by
\begin{equation}
	\label{parabolic}
	\widetilde{\calM} \dot{p}
%  \vcentcolon= (\calM + \calD \calA^{-1}\calD^*)\, \dot{p}
+ \calK p
  =  \tilde{g},
\end{equation}
with~$\widetilde{\calM} \vcentcolon= \calM + \calD \calA^{-1}\calD^*\colon \cHQ\to\cHQ^*$ and adapted right-hand side~$\tilde{g}\vcentcolon=g-\calD \calA^{-1}\dot{f}$.
For any computed value of $p(t)$ at time $t$, the solution $u(t)$ can be computed by solving the elliptic system
\begin{displaymath}
	\calA u(t) =\calD^* p(t) + f(t).
\end{displaymath}
%

% thermo
We would like to emphasize that system~\eqref{eqn:twoFieldQS} covers general elliptic-parabolic problems~\cite{AltMU20}. Clearly, this includes (quasi-static) linear poroelasticity and thermoelasticity. In contrast, the full model~\eqref{eqn:twoField:operator} covers a general hyperbolic-parabolic coupling.
%
%%%%%%%%%%%%%%%%%%%%%%%%%%%%%%
\subsection{Multi-field network systems}\label{sec:model:network}
For medical applications, the poroelastic model~\eqref{eqn:poro:strong} is often extended by additional pressure variables, e.g., to distinguish vessel types in the investigation of cerebral edema, cf.~\cite{VarCTHLTV16}. For instance, one may distinguish arterial and venous blood flow, which we schematically depict in \Cref{fig:mpet}. Here, the arterial and venous blood each have their own (averaged) pressure variables. For a general description of multiple-network poroelastic theory we refer to \cite{TulV11}. 
\begin{figure}
	\centering
	\begin{tikzpicture}
		\clip (0,0) rectangle (7,4);
		\fill[black!15] (0,0) rectangle (7,4);
		\node[anchor=north east] at (6.8,3.8) {\text{tissue}};
		
		%\draw (0,0) grid (7,4);
		
		\draw[line width=4pt,red] (-0.1,2) ..  controls (3,4) and (3.5,0) .. (7.1,1.5);
		\draw[line width=4pt,blue] (-0.1,0.3) ..  controls (3,4) and (3.5,-0.2) .. (5,4.1);
		\draw[line width=3pt,blue] (2.5,2) .. controls (4,-1.2) and (5,4) .. (7.1,2.9);
		\draw[line width=3pt,red] (3,2.1) .. controls (4,4) and (5,1) .. (7.1,3.5);
		\draw[line width=3pt,red] (5.9,2.55) .. controls (6.5,2.6) .. (7.1,1.9);
		\draw[line width=3pt,blue] (4.8,1.95) .. controls (6.1,1.6) .. (7.1,2.3);
		\draw[line width=3pt,red] (1,2.5) .. controls (2,0) .. (4,-0.1);
		\draw[line width=3pt,blue] (1,1.4) .. controls (2,4) .. (3,4.1);
		\draw[line width=2pt,blue] (0.6,1) .. controls (1.5,0.2) and (2.5,2) .. (4.5,-0.1);
		\draw[line width=2pt,red] (1.4,1.6) .. controls (0.7,0.7) .. (0.5,-0.1);
		\draw[line width=2pt,red] (2,2.5) .. controls (2.5,3) .. (2.5,4.1);
		\draw[line width=2pt,red] (2.3,2.8) .. controls (1,3) .. (0.7,4.1);
		\draw[line width=2pt,red] (2,0.45) .. controls (3,2) and (5,0) .. (7.1,0.5);
		\draw[line width=1pt,red] (4.5,2.6) .. controls (5.1,2.5) and (5.3,4) .. (5.8,4.1);
		\draw[line width=1pt,red] (5.05,3) .. controls (4.5,4) and (4,3) .. (3.5,4.1);		
	\end{tikzpicture}
	\caption{Schematic illustration of arterial (red) and venous (blue) blood flow networks in the tissue (gray) acting as the porous material}
	\label{fig:mpet}
\end{figure}
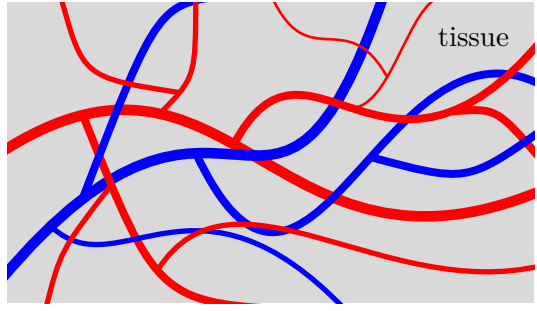

As before, such applications usually come with non-zero boundary conditions, cf.~\Cref{rem:Dirichlet} below. For our analysis, however, we restrict ourselves to homogeneous Dirichlet boundary conditions to focus on the structure of the equations.

In the multi-field network case we want to compute the displacement $u\colon\TT \times\Omega\to \R$ and pressures~$p_i\colon\TT \times\Omega \to \R$, $i=1,\dots,m$, such that
\begin{subequations}
\label{eqn:network:strong}
\begin{align}
 \rho\, \partial_{tt} u -\nabla \cdot \big( \sigma (u) \big) + \sum_{i=1}^m \nabla (\alpha_i p_i) &= \hat{f} && \text{in } (0,T] \times \Omega, \\
  \partial_t\Big(\alpha_i \nabla \cdot u + \frac{1}{M} p_i \Big)- \nabla \cdot \Big( \frac{\kappa_i}{\nu_i} \nabla p_i \Big) - \sum_{j\neq i} \beta_{ij} (p_i-p_j) &= \hat{g}_i && \text{in } (0,T] \times \Omega
\end{align}
\end{subequations}
with initial conditions for $u$, $\dot{u}$, and all $p_i$. Note that the second equation has to be considered for $i=1,\dots, m$, leading to~$m+1$ equations in total. The parameters~$\beta_{ij}$ model exchange rates from one subsystem to the other and are usually small. In several applications, symmetric exchange rates~$\beta_{ij}=\beta_{ji}$ are considered, see e.g.~\cite{TulV11}.

To shorten notation, we introduce the matrix~$B \vcentcolon= [\beta_{ij}]_{i,j=1}^m \in \R^{m,m}$ with diagonal entries~$\beta_{ii} \vcentcolon= -\sum_{j\neq i} \beta_{ij}$, $i=1,\dots,m$.
Thus, $B$ contains all exchange rates of the system in the off-diagonal elements and the row sums are all equal to zero by construction. In this way, the exchange conditions resemble Kirchhoff's laws in electric or energy transport networks \cite{EggKLMM18,Sch13,SchJ14}.

For the weak formulation of~\eqref{eqn:network:strong} we define operators~$\calK_i\colon\Q\to\Q^*$ and~$\calD_i\colon\V\to\cHQ^*$ as in \Cref{sec:model:two} with the same properties as~$\calK$ and~$\calD$, respectively. We further introduce the block operators
\begin{equation*}
  \calKK{B} \vcentcolon=
  \begin{bmatrix} \calK_1 &  & \\  & \ddots & \\ & & \calK_m \end{bmatrix} + B \otimes \calI, \qquad
  \calMM \vcentcolon=
  \begin{bmatrix} \calM &  & \\  & \ddots & \\ & & \calM \end{bmatrix}, \qquad
  \calDD \vcentcolon= \begin{bmatrix} \calD_1 \\  \vdots \\ \calD_m \end{bmatrix}.
\end{equation*}
Note that, in order to allow a compact notion of the network case, we write~$\Q^m$ and~$\cHQ^m$ for the product spaces~$\Q\times\dots\times\Q$ and~$\cHQ\times\dots\times \cHQ$, respectively.
Further, $B \otimes \calI$ mimics the Kronecker product, i.e., for~$\mathbf{p} = [p_1,\dots,p_m]^T, \mathbf{q} = [q_1,\dots,q_m]^T\in\Q^m$ we have
\begin{displaymath}
  \langle (B \otimes \calI) \mathbf p, \mathbf q\rangle
  \vcentcolon= \sum_{i=1}^m \sum_{j=1}^m \beta_{ij}\, \langle \calI p_j, q_i\rangle
  = \sum_{i=1}^m \sum_{j=1}^m \beta_{ij}\, (p_j, q_i)_{\cHQ}.
\end{displaymath}
\begin{lemma}
	\label{lem:ellipticBB}
	The bounded operator~$\calKK{B}\colon \Q^m \to [\Q^m]^*$ is elliptic for sufficiently small exchange rates~$\beta_{ij}$. Further, $\calKK{B}$ is self-adjoint if the matrix~$B$ is symmetric.
\end{lemma}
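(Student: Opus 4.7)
My plan is to treat the three claims (boundedness, ellipticity for small exchange rates, and self-adjointness when $B$ is symmetric) in turn, exploiting that the block-diagonal part of $\calKK{B}$ inherits its good properties from those already established for each $\calK_i$, while $B\otimes\calI$ is a lower-order perturbation because $\calI\colon\Q\to\Q^*$ factors through the Gelfand triple $\Q\hookrightarrow\cHQ\hookrightarrow\Q^*$ and is therefore continuous but not elliptic.

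Boundedness is immediate: for $\mathbf{p},\mathbf{q}\in\Q^m$ one has
\begin{equation*}
\langle \calKK{B}\mathbf{p},\mathbf{q}\rangle
= \sum_{i=1}^m \langle \calK_i p_i, q_i\rangle + \sum_{i,j=1}^m \beta_{ij}(p_j,q_i)_{\cHQ},
\end{equation*}
and the first sum is controlled by $\max_i\|\calK_i\|\,\|\mathbf{p}\|_{\Q^m}\|\mathbf{q}\|_{\Q^m}$ by boundedness of each $\calK_i$, while the second is controlled by $\|B\|\,C_\text{emb}^2\,\|\mathbf{p}\|_{\Q^m}\|\mathbf{q}\|_{\Q^m}$ with $C_\text{emb}$ the embedding constant of $\Q\hookrightarrow\cHQ$ and $\|B\|$ any matrix norm dominating $|\beta_{ij}|$.

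For ellipticity I would test with $\mathbf{q}=\mathbf{p}$ and use ellipticity of each $\calK_i$: letting $c_{\calK}:=\min_i c_{\calK_i}>0$,
\begin{equation*}
\langle \calKK{B}\mathbf{p},\mathbf{p}\rangle
\geq c_{\calK}\sum_{i=1}^m\|p_i\|_{\Q}^2 - \Big|\sum_{i,j=1}^m\beta_{ij}(p_j,p_i)_{\cHQ}\Big|.
\end{equation*}
The quadratic form on the right, viewed via $a_i:=\|p_i\|_{\cHQ}$, is bounded in absolute value by $\rho(|B|)\sum_i a_i^2$ with $|B|$ the entrywise modulus and $\rho$ its spectral radius, and hence by $\rho(|B|)\,C_\text{emb}^2\,\|\mathbf{p}\|_{\Q^m}^2$. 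Thus
\begin{equation*}
\langle \calKK{B}\mathbf{p},\mathbf{p}\rangle \geq \bigl(c_{\calK}-\rho(|B|)\,C_\text{emb}^2\bigr)\|\mathbf{p}\|_{\Q^m}^2,
\end{equation*}
which is the desired ellipticity as soon as $\rho(|B|)<c_{\calK}/C_\text{emb}^2$; this is the precise meaning of ``sufficiently small exchange rates''.

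For self-adjointness, the block-diagonal part is self-adjoint because each $\calK_i$ is, so it suffices to show that $B\otimes\calI$ is self-adjoint whenever $B=B^T$. Using the definition and the symmetry of the $L^2$ inner product,
\begin{equation*}
\langle (B\otimes\calI)\mathbf{p},\mathbf{q}\rangle
= \sum_{i,j}\beta_{ij}(p_j,q_i)_{\cHQ}
= \sum_{i,j}\beta_{ji}(p_j,q_i)_{\cHQ}
= \sum_{i,j}\beta_{ij}(q_j,p_i)_{\cHQ}
= \langle (B\otimes\calI)\mathbf{q},\mathbf{p}\rangle,
\end{equation*}
where the second equality uses $\beta_{ij}=\beta_{ji}$ and the third is a relabelling of the summation indices. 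I do not expect a serious obstacle; the only point requiring a little care is pinning down a quantitative smallness condition on $B$, and the cleanest formulation is via the spectral radius of $|B|$ together with the $\Q\hookrightarrow\cHQ$ embedding constant, as above.
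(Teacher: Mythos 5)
Your overall strategy is the same as the paper's: split $\calKK{B}$ into the block-diagonal part, which is uniformly elliptic, plus the perturbation $B\otimes\calI$, which acts only at the $\cHQ$-level and is therefore controlled through the embedding constant $C_{\Q\hookrightarrow\cHQ}$; the self-adjointness claim is handled by the same index relabelling the paper regards as obvious, and your explicit boundedness estimate is a harmless addition. However, one step in your ellipticity argument is false as stated: the bound
\begin{equation*}
\Bigl|\sum_{i,j=1}^m\beta_{ij}(p_j,p_i)_{\cHQ}\Bigr|\le \sum_{i,j=1}^m |\beta_{ij}|\,a_i a_j \le \rho(|B|)\sum_{i=1}^m a_i^2,\qquad a_i=\|p_i\|_{\cHQ},
\end{equation*}
does not hold in general, because the spectral radius of a nonsymmetric matrix does not dominate its quadratic form. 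For $M=\bigl(\begin{smallmatrix}0&1\\0&0\end{smallmatrix}\bigr)$ one has $\rho(M)=0$ but $a^TMa=a_1a_2>0$ for $a=(1,1)^T$, and the defect persists even for matrices with the row-sum structure $\beta_{ii}=-\sum_{j\neq i}\beta_{ij}$ imposed here (take $\beta_{12}=1$, $\beta_{21}=0$ and $a=(1,\sqrt{2}-1)^T$). Since the lemma explicitly does \emph{not} assume $B$ symmetric (that hypothesis enters only for the self-adjointness claim), you cannot invoke $\rho(|B|)=\lambda_{\max}(|B|)$.

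The gap is quantitative rather than structural and is repaired by replacing $\rho(|B|)$ with any constant that genuinely bounds the quadratic form and tends to zero with the exchange rates, e.g.\ $\lambda_{\max}\bigl(\tfrac12(|B|+|B|^T)\bigr)$, the spectral norm $\||B|\|_2$, or --- as the paper does --- the crude estimate $\sum_{i,j}|\beta_{ij}|a_ia_j\le 2m\,C_\beta\sum_i a_i^2$ with $C_\beta=\max_{i\neq j}|\beta_{ij}|$, where the factor $2m$ absorbs both the off-diagonal terms (via $(\sum_i a_i)^2\le m\sum_i a_i^2$) and the diagonal entries $|\beta_{ii}|\le (m-1)C_\beta$. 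With any such correction your argument yields $\langle\calKK{B}\mathbf{p},\mathbf{p}\rangle\ge (c_{\calK}-C(B)\,C_{\Q\hookrightarrow\cHQ}^2)\|\mathbf{p}\|_{\Q^m}^2$ and the stated conclusion follows exactly as in the paper.
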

\begin{proof}
The claim on the symmetry of~$\calKK{B}$ is obvious, since all operators~$\calK_i$ are self-adjoint. Let~$c$ denote the (uniform) ellipticity constant for all~$\calK_i$ and~$C_\beta\vcentcolon= \max_{i\neq j} |\beta_{ij}|$. Then,  for~$\mathbf{p} = [p_1,\dots,p_m]^T\in\Q^m$ we obtain the estimate
\begin{displaymath}
  \langle \calKK{B} \mathbf{p}, \mathbf{p}\rangle
  = \sum_{i=1}^m \langle \calK_i p_i, p_i\rangle + \langle (B \otimes \calI) \mathbf{p}, \mathbf{p}\rangle
  \ge c\sum_{i=1}^m \|p_i\|^2_\Q - 2m\, C_\beta\sum_{i=1}^m \|p_i\|^2_{\cHQ}.
\end{displaymath}
The continuous embedding $\Q\hookrightarrow\cHQ$ implies~$\langle \calKK{B} \mathbf{p}, \mathbf{p}\rangle \ge (c-2m\, C_\beta C_{\Q\hookrightarrow\cHQ}^2) \|\mathbf{p}\|^2_\Q$ and hence the ellipticity of~$\calKK{B}$ for~$C_\beta < c/(2m\, C_{\Q\hookrightarrow\cHQ}^2)$.
Here $C_{\Q\hookrightarrow\cHQ}$ denotes the constant of the continuous embedding~$\Q\hookrightarrow\cHQ$.
\end{proof}
\begin{assumption}
	\label{ass:smallExchangeRates}
	Throughout this paper, we assume that the operator~$\calKK{B}$ is elliptic and thus invertible.
\end{assumption}		
Note that \Cref{ass:smallExchangeRates} does {\em not} include the symmetry of~$B$. For the nonsymmetric case we make the following observation.
\begin{lemma}
	\label{lem:smallExchangeRatesSymBeta}
	Let $B_{\mathrm{sym}} \vcentcolon= \tfrac{1}{2}(B+B^T)$. Then $\calKK{B}$ is elliptic if and only if $\calKK{B_{\mathrm{sym}}}$ is elliptic.
\end{lemma}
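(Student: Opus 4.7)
The plan is to reduce the equivalence to the observation that ellipticity depends only on the quadratic form $\mathbf{p}\mapsto \langle \calKK{B}\mathbf{p},\mathbf{p}\rangle$, and that this quadratic form is invariant under replacing $B$ by its symmetric part.

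First I would compute, for any $\mathbf{p}=[p_1,\dots,p_m]^T\in\Q^m$,
\begin{equation*}
  \langle (B\otimes\calI)\mathbf{p},\mathbf{p}\rangle
  = \sum_{i,j=1}^m \beta_{ij}\,(p_j,p_i)_{\cHQ}.
\end{equation*}
Since the $L^2$-inner product is symmetric, the coefficients $a_{ij}\vcentcolon=(p_j,p_i)_{\cHQ}$ satisfy $a_{ij}=a_{ji}$. A standard index-swap argument then shows that the antisymmetric part $B_{\mathrm{anti}}\vcentcolon=\tfrac12(B-B^T)$ contributes zero to this double sum, so
\begin{equation*}
  \langle (B\otimes\calI)\mathbf{p},\mathbf{p}\rangle
  = \langle (B_{\mathrm{sym}}\otimes\calI)\mathbf{p},\mathbf{p}\rangle.
\end{equation*}
Adding the diagonal term $\sum_i\langle\calK_i p_i,p_i\rangle$ on both sides yields
\begin{equation*}
  \langle \calKK{B}\mathbf{p},\mathbf{p}\rangle = \langle \calKK{B_{\mathrm{sym}}}\mathbf{p},\mathbf{p}\rangle
  \qquad \text{for all } \mathbf{p}\in\Q^m.
\end{equation*}

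Since ellipticity of a bounded linear operator $\mathcal{T}\colon \Q^m\to[\Q^m]^*$ is the existence of a constant $c>0$ with $\langle \mathcal{T}\mathbf{p},\mathbf{p}\rangle\ge c\|\mathbf{p}\|_{\Q^m}^2$ for all $\mathbf{p}\in\Q^m$, the identity above immediately gives that $\calKK{B}$ is elliptic with constant $c$ if and only if $\calKK{B_{\mathrm{sym}}}$ is elliptic with the same constant $c$. This completes the equivalence.

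The proof is essentially a one-liner once the symmetry of the pivot-space inner product is exploited, so there is no real obstacle here; the only point to be careful about is that we work with real-valued functions (so that $(\,\cdot\,,\,\cdot\,)_{\cHQ}$ is genuinely symmetric, not just Hermitian), which is consistent with the setting of the paper.
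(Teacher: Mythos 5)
Your proof is correct and follows essentially the same route as the paper: both arguments reduce the claim to the identity $\langle \calKK{B}\mathbf{p},\mathbf{p}\rangle = \langle \calKK{B_{\mathrm{sym}}}\mathbf{p},\mathbf{p}\rangle$, obtained by showing that the skew-symmetric part of $B$ contributes nothing to the quadratic form because the $\cHQ$-inner product is symmetric. Your observation that the ellipticity constant is even preserved is a small but accurate bonus not spelled out in the paper.
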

\begin{proof}
Let $\mathbf{q}\in\Q^m$. The proof follows immediately from the identity
\begin{displaymath}
	\langle \calKK{B}\mathbf{q},\mathbf{q}\rangle
%	= \langle \calKK{B_{\mathrm{sym}}}\mathbf{q},\mathbf{q}\rangle + \tfrac{1}{2} \big\langle ((B-B^T)\otimes \calI) \mathbf{q},\mathbf{q} \big\rangle
	= \langle \calKK{B_{\mathrm{sym}}}\mathbf{q},\mathbf{q}\rangle + \tfrac{1}{2} \big\langle (B\otimes \calI-B^T\otimes \calI) \mathbf{q},\mathbf{q} \big\rangle
\end{displaymath}
and the observation that
\begin{displaymath}
	\langle (B\otimes \calI-B^T\otimes \calI)\mathbf{q},\mathbf{q}\rangle
	= \langle (B\otimes \calI)\mathbf{q},\mathbf{q}\rangle - \langle (B\otimes \calI)\mathbf{q},\mathbf{q}\rangle
	= 0.\qedhere
\end{displaymath}
\end{proof}

Let us emphasize that \Cref{lem:ellipticBB} indicates that small exchange rates in the sense that
\begin{equation*}
  \max_{i\neq j} |\beta_{ij}|
  < \frac{c}{2m\, C_{\Q\hookrightarrow\cHQ}^2},
\end{equation*}
with~$c$ denoting the uniform ellipticity constant of all~$\calK_i$, is a sufficient condition for the ellipticity in \Cref{ass:smallExchangeRates}.

The weak formulation of~\eqref{eqn:network:strong} in terms of the introduced operator matrices has the form: Determine~$u\colon\TT \to \V$ and~$\mathbf p\colon\TT \to \Q^m$ such that for almost all $t\in \TT$ the operator equation
\begin{align}
\label{eqn:network:opMatrix}
\begin{bmatrix} \calY & 0 \\ 0 & 0 \end{bmatrix}  \begin{bmatrix} \ddot{u}\\ \ddot{\mathbf p} \end{bmatrix}+ \begin{bmatrix} 0 & 0 \\ \calDD & \calMM \end{bmatrix}
  \begin{bmatrix} \dot{u}\\ \dot{\mathbf p} \end{bmatrix}
  +
  \begin{bmatrix} \calA & -\calDD^* \\ 0 & \calKK{B} \end{bmatrix}
  \begin{bmatrix} u\\ \mathbf p \end{bmatrix}
  = \begin{bmatrix} f\\ {\mathbf g} \end{bmatrix}
\end{align}
is satisfied. Here we have used the notion~$\mathbf g \vcentcolon= [g_1,\dots,g_m]^T$, where the~$g_i$ are defined in terms of~$\hat g_i$ as in \Cref{sec:model:two}.

Thus, the network case has the same structure as the two-field poroelastic model, cf.~system~\eqref{eqn:twoField:opMatrix}. A direct consequence is that a semi-discretization in space of the multi-field network model~\eqref{eqn:network:opMatrix} yields an operator equation of the same structure as the two-field model.
%
%%%%%%%%%%%%%%%%%%%%%%%%%%%%%%
\subsection{Quasi-static network case}\label{sec:model:networkQS}
Similar to the two-field case considered above, we briefly discuss the case where the coefficient~$\rho$ is assumed to be small and thus neglected. This then yields the quasi-static network model
\begin{align*}
%\label{eqn:networkQS}
\begin{bmatrix} 0 & 0 \\ \calDD & \calMM \end{bmatrix}
\begin{bmatrix} \dot{u}\\ \dot{\mathbf p} \end{bmatrix}
+
\begin{bmatrix} \calA & -\calDD^* \\ 0 & \calKK{B} \end{bmatrix}
\begin{bmatrix} u\\ \mathbf{p} \end{bmatrix}
= \begin{bmatrix} f\\ {\mathbf g} \end{bmatrix}
\end{align*}
with initial condition~$\mathbf{p}(0) = \mathbf{p}^0 \in \cHQ^m$.
As in the two-field model, a consistency condition then uniquely determines~$u^0$.

Assuming once more that~$f$ is differentiable in time, we can use the invertibility of~$\calA$ to eliminate the first equation. This then leads to a system of~$m$ coupled parabolic equations, namely
\begin{equation*}
\widetilde{\calM} \dot{\mathbf p}
+ \calKK{B} \mathbf p
=  \tilde{\mathbf g}
\end{equation*}
with~$\widetilde{\calM} \vcentcolon= \calMM + \calDD \calA^{-1}\calDD^*\colon \cHQ^m\to[\cHQ^m]^*$ and adapted right-hand side~$\tilde{\mathbf g}\vcentcolon=\mathbf g-\calDD \calA^{-1}\dot{f}$.
To restore the displacement~$u$, one computes the solution to the elliptic problem~$\calA u =\calDD^* \mathbf{p} + f$.
\begin{example}[Poroelastic brain model]
The example in~\cite{VarCTHLTV16} considers $\rho \ddot u=0$, $m=4$ pressure variables, and no external forces or injections, i.e., $f\equiv 0$ and $g_i\equiv 0$. This means that the system is driven only by its hydrostatic pressure gradients. 
\end{example}
%
%
%=============================================================================
%=========  pH models
%=============================================================================
\section{Port-Hamiltonian descriptor system formulation}\label{sec:PH}
To render the paper self-contained, we recall the definition and some results for pH-DAEs, also called {\em port-Hamiltonian descriptor systems}. For the finite-dimensional setting, this is presented in \cite{MehM19}, see also earlier versions in~\cite{BeaMXZ18,Sch13}.
Afterwards, we show that the poroelastic equations can be formulated in a corresponding way, leading to a port-Hamiltonian partial differential-algebraic equation (pH-PDAE).
%
%
%%%%%%%%%%%%%%%%%%%%%%%%%%%%%%
\subsection{Port-Hamiltonian framework}\label{sec:PH:ODEs}
We start with the definition of a pH-DAE in the finite-dimensional framework, which we will mimic in the pH-PDAE setting. We present a special case of the definition given in~\cite{BeaMXZ18} % [namely $Q = \Id$]
that is sufficient for our analysis, using a slightly different notation than therein. For a generalization to time-dependent and nonlinear descriptor systems we refer to \cite{MehM19}.
\begin{definition}[pH-DAE]
	\label{def:pH-DAE}
	A descriptor system of the form
	\begin{equation}
		\label{eqn:pH-DAE}
		\begin{bmatrix}
			E\dot{z}\\
			y
		\end{bmatrix} = \begin{bmatrix}
			A & B\\
			C & D
		\end{bmatrix}\begin{bmatrix}
			z\\
			v
		\end{bmatrix}
	\end{equation}
	with \emph{state} $z\colon \mathbb{T}\to\R^n$, \emph{input} $v\colon \mathbb{T}\to\R^m$, and \emph{output} $y\colon \mathbb{T}\to \R^m$ is called \emph{pH-DAE} with associated quadratic \emph{Hamiltonian}
	\begin{displaymath}
		\mathcal{H}\colon \R^n \to \R,\qquad
		\mathcal{H}(z)
		\vcentcolon= \langle  E z,z\rangle
		= z^T E z
	\end{displaymath}
	if $E^T = E \in \R^{n\times n}$ is positive semi-definite and the symmetric matrix
	\begin{displaymath}
		W
		\vcentcolon= \mathrm{sym}\left(\begin{bmatrix} -A & -B\\
		C & D
		\end{bmatrix}\right)
		\vcentcolon= \frac{1}{2}\left(\begin{bmatrix}
			-A & -B\\
			C & D
		\end{bmatrix} + \begin{bmatrix}
			-A & -B\\
			C & D
		\end{bmatrix}^T\right) \in \R^{(n+m)\times (n+m)},
	\end{displaymath}
	called the \emph{dissipation matrix}, is positive semi-definite.
\end{definition}
Note that~\eqref{eqn:pH-DAE} can be equivalently written as
\begin{subequations}
	\label{eqn:pH-DAEv2}
	\begin{align}
			E\dot{z} &= (J-R)z + (G-P)v,\\
			y &= (G+P)^Tz + (S-N)v,		
		\end{align}
\end{subequations}
with skew-symmetric matrices $J \vcentcolon= \tfrac{1}{2}(A-A^T)$ and $N\vcentcolon= -\tfrac{1}{2}(D - D^T)$, symmetric matrices $R \vcentcolon=  -\tfrac{1}{2}(A+A^T)$ and $S \vcentcolon= \tfrac{1}{2}(D+D^T)$, and $G \vcentcolon= \tfrac{1}{2}(B - C^T)$, $P \vcentcolon= -\tfrac{1}{2}(B + C^T)$. Computing the dissipation matrix for~\eqref{eqn:pH-DAEv2} yields
\begin{equation}
	\label{eqn:dissipationMatrix}
	W = \begin{bmatrix}
		R & P\\
		P^T & N
	\end{bmatrix}.
\end{equation}
We thus obtain the following equivalent formulation of a pH-DAE.
\begin{lemma}
	\label{lem:pHequivalent}
	The descriptor system~\eqref{eqn:pH-DAEv2} with symmetric matrices $R = R^T$, $S = S^T$, skew-symmetric matrices $J=-J^T$, $N=-N^T$, and symmetric positive semi-definite matrix $E = E^T$ is a pH-DAE, if the dissipation matrix~\eqref{eqn:dissipationMatrix} is positive semi-definite.
\end{lemma}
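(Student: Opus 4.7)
The plan is a direct verification using \Cref{def:pH-DAE}. With the identifications $A = J - R$, $B = G - P$, $C = (G+P)^T$, and $D = S - N$ read off from \eqref{eqn:pH-DAEv2}, the system is literally an instance of \eqref{eqn:pH-DAE}. Since $E = E^T$ is already assumed positive semi-definite by hypothesis, the remaining task is to show that the symmetric part of the extended block matrix appearing in \Cref{def:pH-DAE} coincides with the dissipation matrix $W$ displayed in \eqref{eqn:dissipationMatrix}; the assumed positive semi-definiteness of $W$ then supplies the second condition of \Cref{def:pH-DAE}, completing the proof.

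The core step is the block-wise evaluation of
\[ \mathrm{sym}\begin{bmatrix} -A & -B \\ C & D \end{bmatrix} = \frac{1}{2} \begin{bmatrix} -(A + A^T) & -B + C^T \\ C - B^T & D + D^T \end{bmatrix}. \]
Using $J = -J^T$ and $R = R^T$, the upper-left block reduces to $R$; using $S = S^T$ and $N = -N^T$, the lower-right block reduces to $S$; and substituting $B = G-P$ and $C = (G+P)^T$ in the off-diagonal blocks collapses $\tfrac12(-B+C^T)$ to $P$ and $\tfrac12(C-B^T)$ to $P^T$. This yields exactly the block form in \eqref{eqn:dissipationMatrix}.

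Combined with the hypothesis $W \succeq 0$ and the positive semi-definiteness of $E$, both conditions of \Cref{def:pH-DAE} are satisfied, and so \eqref{eqn:pH-DAEv2} is a pH-DAE. No real obstacle is anticipated: the argument is a purely algebraic verification, and the only point requiring care is to keep track of which sign and symmetry convention governs each of $J, R, S, N, G, P$ when identifying the blocks of the symmetric part with the corresponding entries in \eqref{eqn:dissipationMatrix}.
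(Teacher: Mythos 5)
Your verification is correct and is precisely the computation the paper itself performs immediately before stating the lemma (which is then left without a separate proof): identify $A=J-R$, $B=G-P$, $C=(G+P)^T$, $D=S-N$, take the symmetric part blockwise, and invoke \Cref{def:pH-DAE}. Note that your (correct) evaluation of the lower-right block as $\tfrac12(D+D^T)=S$ in fact exposes a typo in the paper's display~\eqref{eqn:dissipationMatrix}, where that block is written as $N$; with $S$ in place of $N$ your identification is exact and the argument goes through verbatim.
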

Let us emphasize that if no feedthrough term is present, i.e., $D=0$ in \eqref{eqn:pH-DAE} or $S=N=0$ in \eqref{eqn:pH-DAEv2}, then the dissipation matrix~\eqref{eqn:dissipationMatrix} being semi-definite immediately implies $P=0$ such that the pH formulation is given as
\begin{subequations}
	\label{eqn:pH-DAEv3}
	\begin{align}
			E\dot{z} &= (J-R)z + Gv,\\
			y &= G^Tz,
		\end{align}
\end{subequations}
with skew-symmetric $J$ and symmetric positive semi-definite $E$ and $R$. For our remaining analysis, the latter formulation is sufficient, serving as the main reason to restrict ourselves to systems of the form~\eqref{eqn:pH-DAEv3}.

The class of pH-DAEs, respectively the extensions presented in \cite{MehM19}, have many nice properties, see also~\cite{BeaMXZ18}. The general definition in \cite{MehM19} extends to  weak solutions and infinite dimension, the structure is invariant under state-time diffeomorphisms, the structure is invariant when making the system autonomous, and the class can be described via a Dirac structure. Furthermore, pH-DAEs satisfy a power balance equation and dissipation inequality.
\begin{theorem}[Dissipation inequality for pH-DAEs~\cite{BeaMXZ18,MehM19}]
\label{thm:di}
Consider a pH-DAE of the form~\eqref{eqn:pH-DAE}. Then the power balance equation
  \begin{equation}\label{eq:powerBalanceEq}
    \ddt \mathcal{H}(z(t)) = - \begin{bmatrix} z \\ v\end{bmatrix}^T W \begin{bmatrix}z \\ v \end{bmatrix}+ y^Tv,
  \end{equation}
holds along any solution $z$, for any input $v$. In particular, the \emph{dissipation inequality}
  \begin{equation}\label{eq:dissIneq}
    \mathcal{H}(z(t_2)) - \mathcal{H}(z(t_1)) \leq \int_{t_1}^{t_2}y(\tau)^Tv(\tau) \,\mathrm{d}\tau
  \end{equation}
holds for all $t_2 > t_1$.
\end{theorem}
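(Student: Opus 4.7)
The plan is to establish the power balance identity \eqref{eq:powerBalanceEq} by a direct computation along trajectories, after which the dissipation inequality \eqref{eq:dissIneq} follows by an elementary integration argument using only the semi-definiteness of $W$.

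First I would differentiate the Hamiltonian along a solution. Since $E=E^T$ is constant, the chain rule yields $\tddt \mathcal{H}(z(t)) = \dot{z}^T E z + z^T E\dot{z} = 2\, z^T E\dot{z}$, where I interpret $E\dot{z}$ through the first block row of \eqref{eqn:pH-DAE}, namely $E\dot{z}=Az+Bv$. Substituting gives
\begin{equation*}
  \tfrac{1}{2}\tddt \mathcal{H}(z(t)) \;=\; z^T(Az+Bv).
\end{equation*}
The task then reduces to showing that the right-hand side of \eqref{eq:powerBalanceEq} equals (up to the appropriate factor) $z^T(Az+Bv)$.

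Second, I would expand $[z,v]^T W [z,v]$ using the definition of $W$ as the symmetric part of $\left[\begin{smallmatrix}-A & -B\\ C & D\end{smallmatrix}\right]$. The diagonal blocks contribute $\tfrac{1}{2}z^T(-A-A^T)z$ and $\tfrac{1}{2}v^T(D+D^T)v$, while the off-diagonal blocks contribute the mixed term $\tfrac{1}{2}\bigl(z^T(-B+C^T)v + v^T(-B^T+C)z\bigr) = z^T(C^T - B)v$. Combined with $y^T v = z^TC^T v + v^TDv$ obtained from the second block row, the $C^T$ and $D^T$ contributions cancel, leaving precisely $z^T A z + z^T B v$, which matches $z^T(Az+Bv)$ and thus yields \eqref{eq:powerBalanceEq}.

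Third, the dissipation inequality follows immediately: since $W\succeq 0$, the quadratic form $[z,v]^T W [z,v]$ is non-negative pointwise in $t$, so integrating \eqref{eq:powerBalanceEq} from $t_1$ to $t_2$ drops that term in the estimate and yields \eqref{eq:dissIneq}. The one subtlety I would flag as the main obstacle is that for a genuine DAE the state trajectory $z$ need not be classically differentiable on all of $\TT$, and $\dot z$ is only determined up to the kernel of $E$. This is harmless here because the identity involves $z^T E \dot z$ rather than $\dot z$ alone, so it depends only on $E\dot z = Az+Bv$, which is well-defined from the equations; nonetheless, a rigorous statement requires adopting the appropriate weak solution concept from \cite{BeaMXZ18,MehM19}, along which the chain rule above is justified.
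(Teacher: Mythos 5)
The paper itself gives no proof of this theorem---it is imported verbatim from \cite{BeaMXZ18,MehM19}---so there is no in-house argument to compare against; your derivation is the standard one and matches in spirit the computation the authors do carry out for the concrete poroelastic system in Section 3.2 (differentiate $\mathcal{H}$, substitute the state equation, identify the dissipation term and $y^Tv$). Your algebra is correct: expanding $W$ as the symmetric part of the block matrix and using $y=Cz+Dv$ indeed yields $-[z;v]^TW[z;v]+y^Tv=z^T(Az+Bv)=z^TE\dot z$, and the step from \eqref{eq:powerBalanceEq} to \eqref{eq:dissIneq} via $W\succeq 0$ is immediate; your caveat that for a DAE $\dot z$ is only determined modulo $\ker E$, while $z^TE\dot z$ is well defined from the equation, is exactly the right remark. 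The one place you should not hedge is the phrase ``up to the appropriate factor'': with the Hamiltonian as literally defined in \Cref{def:pH-DAE}, namely $\mathcal{H}(z)=z^TEz$ without a factor $\tfrac{1}{2}$, the chain rule gives $\ddt\mathcal{H}=2\,z^TE\dot z$, which is \emph{twice} the right-hand side you computed, so \eqref{eq:powerBalanceEq} fails by a factor of $2$ as stated. The identity holds verbatim only for $\mathcal{H}(z)=\tfrac{1}{2}z^TEz$, which is the normalization used in the cited references and the one the authors themselves adopt everywhere else (cf.\ the $\tfrac{1}{2}$ in \eqref{eqn:energyfull}); a complete write-up should either point out this typo in \Cref{def:pH-DAE} or restate the Hamiltonian with the $\tfrac{1}{2}$ before invoking the power balance.
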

Another important property, see \cite{MehM19}, is that the class of pH-DAEs is invariant under power-conserving or dissipative interconnection. To see this, consider two pH-DAEs
\begin{align*}
  E_i\dot{z}_i &= (J_i- R_i)z_i + G_iv_i, \\
  y_i &= G_i^Tz_i
\end{align*}
with Hamiltonians $\mathcal{H}_i\colon\mathbb{R}^{n_i}\to\mathbb{R}$, for $i=1,2$, and assume that the two systems are interconnected via
\begin{displaymath}
	\begin{bmatrix}
		v_1\\
		v_2
	\end{bmatrix} = \begin{bmatrix}
		F_{11} & F_{12}\\
		F_{21} & F_{22}\end{bmatrix}
		\begin{bmatrix}
		y_1\\
		y_2
	\end{bmatrix} + \begin{bmatrix}
		\tilde{v}_1\\
		\tilde{v}_2
	\end{bmatrix},\qquad F \vcentcolon= \begin{bmatrix}
		F_{11} & F_{12}\\
		F_{21} & F_{22}\end{bmatrix}.
\end{displaymath}
Then, defining the aggregated state, input, and output, respectively, via
\begin{displaymath}
  z \vcentcolon= \begin{bmatrix} z_1 \\ z_2 \end{bmatrix}, \qquad
  \tilde{v} \vcentcolon= \begin{bmatrix} \tilde{v}_1 \\ \tilde{v}_2 \end{bmatrix}, \qquad
  y \vcentcolon= \begin{bmatrix} y_1 \\ y_2 \end{bmatrix},
\end{displaymath}
and aggregated system matrices $J \vcentcolon= \mathrm{diag}(J_1,J_2)$, $R \vcentcolon= \mathrm{diag}(R_1,R_2)$, $G \vcentcolon= \mathrm{diag}(G_1,G_2)$, the coupled system is given by
\begin{subequations}
	\label{eqn:interconnectedSystem}
	\begin{align}
		E\dot{z} &= \big(J-R+GFG^T\big)z + G\tilde{v},\\
		y &= G^T\state.
	\end{align}
\end{subequations}
Writing $F = F_{\mathrm{skew}} + F_{\mathrm{sym}}$ with skew-symmetric part $F_{\mathrm{skew}}$ (corresponding to the power-preserving component) and symmetric part $F_{\mathrm{sym}}$, we immediately observe that~\eqref{eqn:interconnectedSystem}
is again a pH system if, and only if, $R-GF_{\mathrm{sym}}G^T$ is positive semi-definite. A sufficient condition to retain the pH structure is thus to require that $F_{\mathrm{sym}}$ is negative semi-definite corresponding to a potentially dissipative component of the interconnection.

Finally, a key property for our later analysis is that the pH-DAE structure is invariant under Galerkin projection in the sense that whenever we use a variational formulation and restrict the dynamics by using the same  subspace as ansatz and test space; cf.~\cite{BeaMG19,Egg19,EggKLMM18}. This will form the basis for our forthcoming analysis, where we show that the poroelastic (network) equations introduced in \Cref{sec:model} exhibit such a pH structure. Since we analyze the equations in the operator formulation, which we will then call a pH-PDAE, we cannot follow the exact wording but only the spirit of \Cref{def:pH-DAE}, respectively \Cref{lem:pHequivalent}. Nevertheless, applying a spatial discretization by a finite element method afterwards will retain the structural properties such that the semi-discretized models are then pH-DAEs.
Since the operator matrices are constant in time and there is no feed-through term, we consider the special case that the matrices~$P$, $S$, and~$N$ are zero. As a result, the requirements of \Cref{def:pH-DAE}, respectively \Cref{lem:pHequivalent}, reduce to $J$ being skew-symmetric and $E$, $R$ being symmetric as well as positive semi-definite.
%
%
%%%%%%%%%%%%%%%%%%%%%%%%%%%%%%
\subsection{PH formulation of the two-field model}\label{sec:PH:twoField}
In this section, we discuss a reformulation of the operator form of the poroelastic equations, which although we discuss the infinite dimensional case exhibits the pH structure in the spirit of \Cref{def:pH-DAE}. Consequently, a Galerkin projection of the operator form will then lead to a (finite-dimensional) pH-DAE. Note furthermore, that the inhomogeneities $f,g$ are domain-distributed inputs, which demands for the infinite dimensional setting.

Starting with the second-order formulation~\eqref{eqn:twoField:opMatrix}, we first perform a first-order (mixed) formulation by introducing $w=\dot{u}$ as new variable and adding the equation~$\calA w= \calA \dot{u}$ to the system. Recall that in our case $\calA^*=\calA$ is elliptic and thus, invertible. Assuming~$\rho>0$, we obtain the (implicit) first-order system
\begin{align}
\label{eqn:twoField:opMatrix3}
\begin{bmatrix}\calY & 0 & 0\\ 0 & \calA & 0 \\ 0 & 0 & \calM\end{bmatrix}
  \begin{bmatrix} \dot{w} \\\dot{u}\\ \dot{p} \end{bmatrix}
  = \begin{bmatrix} 0 & -\calA & \calD^* \\ \calA & 0 & 0 \\ -\calD & 0 & -\calK \end{bmatrix}
  \begin{bmatrix} w\\ u\\ p \end{bmatrix}  + \begin{bmatrix} f\\ 0 \\ g \end{bmatrix}.
\end{align}
Note that the three operators~$\calY$, $\calA$, and $\calM$ appearing on the left-hand side are invertible. Hence, there are no algebraic constraints. The canonical Hamiltonian (energy function) associated with this system is given by
\begin{equation}
	\label{eqn:energyfull}
	\mathcal{H}(w,u,p)
	\vcentcolon= \frac{1}{2}\, \Big(\langle \calY w, w\rangle + \langle \calA u, u\rangle + \langle\calM p, p\rangle\Big),
\end{equation}
where $\tfrac{1}{2}\langle \calY w,w\rangle$ describes the kinetic part of the energy and $\tfrac{1}{2} \langle \calA u, u\rangle + \tfrac{1}{2}\langle \calM p,p \rangle$ the potential energy.
Let us emphasize that although, formally, the two-field model does not feature any control inputs,
we will view the source terms $f$ and $g$ as external (distributed) inputs (respectively forces, cf.~\cite{Sho00})
 to the system and complement the system with the power-conjugated output, i.e., with
\begin{equation}
	\label{eqn:twoField:powerConjugateOutput}
	y = \begin{bmatrix}
		\Id & 0 & 0\\
		0 & 0 & \Id
	\end{bmatrix}\begin{bmatrix}
		w\\u\\p
	\end{bmatrix} = \begin{bmatrix}
		w\\p
	\end{bmatrix}.
\end{equation}
We immediately observe that the first-order formulation~\eqref{eqn:twoField:opMatrix3} together with the power-conjugated output equation~\eqref{eqn:twoField:powerConjugateOutput} emulates the pH-DAE structure with~$\mathcal{P}=0$, $\mathcal{S} = \mathcal{N} = 0$,
\begin{displaymath}
	\mathcal{E} = \begin{bmatrix}
		\calY & 0 & 0\\
		0 & \calA & 0\\
		0 & 0 & \calM
	\end{bmatrix},\quad
	\mathcal{J} = \begin{bmatrix}
		0 & -\calA & \calD^* \\
		\calA & 0 & 0 \\
		-\calD & 0 & 0
	\end{bmatrix},\quad
	\mathcal{R} = \begin{bmatrix}
		0 & 0 & 0 \\
		0 & 0 & 0 \\
		0 & 0 & \calK
	\end{bmatrix},\quad
	\mathcal{B} = \begin{bmatrix}
		\Id & 0 \\
		0 & 0\\
		0 & \Id
	\end{bmatrix}.
\end{displaymath}
Recall that the operators $\calA$, $\calM$, $\calK$, and~$\calY$ are positive definite.
We expect, similarly to the finite-dimensional case considered in~\Cref{thm:di},
a power balance equation and a dissipation inequality. Let us demonstrate this as
an example for the formulation \eqref{eqn:twoField:opMatrix3}. In the other formulations that we discuss in the following,
the result and proof is analogous.
\begin{theorem}[Dissipation inequality]
	Let $(w,u,p)$ satisfy~\eqref{eqn:twoField:opMatrix3}. Then the Hamiltonian defined in~\eqref{eqn:energyfull} satisfies the power balance equation
	\begin{equation}
		\label{eqn:powerBalanceEquation}
		\ddt {\mathcal{H}}(w,u,p) = - \langle \calK p, p\rangle + \langle v, y\rangle % \langle f,u\rangle + \langle g,q\rangle.
	\end{equation}
	In particular, the Hamiltonian satisfies the dissipation inequality
	\begin{equation*}
%		\label{eqn:dissipationInequality}
		\ddt {\mathcal H}(u,q,p) \leq \langle v, y\rangle. %\langle f, u\rangle + \langle g,q\rangle
	\end{equation*}
\end{theorem}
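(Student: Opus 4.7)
The plan is a standard energy estimate: differentiate $\mathcal{H}$ along solutions, substitute the three equations of \eqref{eqn:twoField:opMatrix3}, and use the self-adjointness of $\calY$, $\calA$, $\calM$ together with the duality pairing $\langle \calD^* p, w\rangle = \langle p, \calD w\rangle$ to cancel the skew-symmetric contributions; only the dissipative term $-\langle \calK p, p\rangle$ and the port term $\langle v, y\rangle$ should remain.

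Concretely, I would first compute
\begin{equation*}
\ddt \mathcal{H}(w,u,p) = \langle \calY \dot w, w\rangle + \langle \calA \dot u, u\rangle + \langle \calM \dot p, p\rangle,
\end{equation*}
where the factors of $1/2$ disappear thanks to the self-adjointness of the three operators. Next I would substitute from the three rows of \eqref{eqn:twoField:opMatrix3}:
\begin{equation*}
\langle \calY \dot w, w\rangle = \langle -\calA u + \calD^* p + f,\, w\rangle,\quad \langle \calA \dot u, u\rangle = \langle \calA w, u\rangle,\quad \langle \calM \dot p, p\rangle = \langle -\calD w - \calK p + g,\, p\rangle.
\end{equation*}

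Now the key cancellations. The self-adjointness of $\calA$ gives $\langle \calA u, w\rangle = \langle \calA w, u\rangle$, so the $\calA$-coupling between the first and second equations cancels. The definition of the dual operator gives $\langle \calD^* p, w\rangle = \langle p, \calD w\rangle$, so the $\calD$/$\calD^*$ cross-terms between the first and third equations cancel as well. What survives is exactly
\begin{equation*}
\ddt \mathcal{H}(w,u,p) = -\langle \calK p, p\rangle + \langle f, w\rangle + \langle g, p\rangle = -\langle \calK p, p\rangle + \langle v, y\rangle,
\end{equation*}
with $v=(f,g)^T$ the input and $y=(w,p)^T$ the power-conjugated output from~\eqref{eqn:twoField:powerConjugateOutput}. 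This is the asserted power balance \eqref{eqn:powerBalanceEquation}. The dissipation inequality is then immediate: since $\calK$ is elliptic, $\langle \calK p, p\rangle \ge 0$, so dropping this non-positive contribution on the right-hand side yields $\ddt \mathcal{H} \le \langle v, y\rangle$.

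I do not expect a real obstacle in the algebra; the only point that needs a line of justification is regularity, namely that $t\mapsto \mathcal{H}(w(t),u(t),p(t))$ is absolutely continuous so that the formal chain rule above is legitimate. This follows from the solution regularity discussed after \eqref{eqn:twoField:operator} (which carries over to the first-order formulation after introducing $w=\dot u$), so the identity $\ddt \langle \calA u, u\rangle = 2\langle \calA \dot u, u\rangle$ and its analogues for $\calY, \calM$ are standard Gelfand-triple chain-rule statements. Everything else is bookkeeping.
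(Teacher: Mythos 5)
Your proposal is correct and follows essentially the same computation as the paper's proof: differentiate the Hamiltonian, substitute the three rows of \eqref{eqn:twoField:opMatrix3}, cancel the $\calA$ and $\calD/\calD^*$ cross-terms by self-adjointness and duality, and drop the nonnegative term $\langle \calK p,p\rangle$ for the dissipation inequality. The only difference is your added (correct and welcome) remark on the chain-rule regularity, which the paper leaves implicit.
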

\begin{proof}
	For $v = [f,\, g]^T$ let $(w,u,p,y)$ satisfy~\eqref{eqn:twoField:opMatrix3} and~\eqref{eqn:twoField:powerConjugateOutput}. Then,
\begin{align*}
		\ddt \mathcal{H}(u,q,p) &= \langle \calY\dot{w},w\rangle + \langle \calA \dot{u},u\rangle + \langle \calM \dot{p},p\rangle\\
		&= \langle -\calA u + \calD^*p + f,w\rangle + \langle \calA w,u\rangle - \langle \calD w + \calK p - g,p\rangle\\
		&= -\langle \calK p,p\rangle + \langle v,y\rangle,
\end{align*}
where the last equality follows from $\langle v,y\rangle = \langle f,w\rangle + \langle g,p\rangle$.
\end{proof}
Note that the pH structure also allows the operators~$\calM$ and $\calY$ to become singular, but still semi-definite. In particular, we immediately have the structure also in the (formal) limiting situation $\rho \ddot =0$. Furthermore, the pH-PDAE structure still holds if $\calK$ is not self-adjoint, it suffices that the symmetric part of $\calK$ is positive semi-definite. In addition, we immediately observe that the power balance equation~\eqref{eqn:powerBalanceEquation} is still satisfied for a nonlinear operator~$\calK$ as discussed in \Cref{rem:poroNonlinear}.

\begin{remark}
Due to the inclusion of the equation~$\calA w= \calA \dot{u}$, where we applied the operator~$\calA$ in order to obtain the skew-adjoint structure, the pH-PDAE~\eqref{eqn:twoField:opMatrix3} formally calls for~$\dot{u}\in L^2(\TT;\V)$. Hence, this formulation is only valid for more regular right-hand sides~$f$ and~$g$.
For an alternative formulation, we use the fact that the square root $\calA^{1/2}$ of $\calA$ is well-defined, cf.~\cite[Cha.\,1,\,\S\,3,~Th.~3.35]{Kat95}). Moreover, $\calA^{1/2}$ is self-adjoint and elliptic. Introducing the new variable~$\widetilde{u} \vcentcolon= \calA^{1/2} u$, and applying~$\calA^{-1/2}$ to the second equation, we obtain
\begin{subequations}
\label{eqn:twoField:opMatrix4}
\begin{align}
	\label{eqn:twoField:opMatrix4:state}
	\begin{bmatrix}
		\calY & 0 & 0\\
		0 & \Id & 0 \\
		0 & 0 & \calM
	\end{bmatrix} \begin{bmatrix}
		\dot{w} \\
		\dot{\widetilde{u}}\\
		\dot{p}
	\end{bmatrix} &= \begin{bmatrix}
		0 & -\calA^{1/2} & \calD^* \\
		\calA^{1/2} & 0 & 0 \\
		-\calD & 0 & -\calK
	\end{bmatrix} \begin{bmatrix}
		w\\
		\widetilde{u}\\
		p
	\end{bmatrix}  + \begin{bmatrix}
		\Id & 0\\
		0 & 0\\
		0 & \Id
 	\end{bmatrix} \begin{bmatrix}
 		f\\g
 	\end{bmatrix},\\
 	\label{eqn:twoField:opMatrix4:output}
 	\widetilde{y} &= \begin{bmatrix}
 		\Id & 0 & 0\\
 		0 & 0 & \Id
 	\end{bmatrix}\begin{bmatrix}
 		w\\
 		\widetilde{u}\\
 		p
 	\end{bmatrix}.
\end{align}
\end{subequations}
The corresponding Hamiltonian reads
\begin{equation*}
%\label{eqn:energyParabolic}
  \widetilde{\mathcal{H}}(w,\widetilde{u},p)
  \vcentcolon= \frac{1}{2}\, \Big(\langle \calY w, w\rangle + \langle \widetilde{u}, \widetilde{u}\rangle + \langle \calM p, p\rangle \Big)
\end{equation*}
and is thus equal to the original energy defined in~\eqref{eqn:energyfull}. Further note that also the output coincides, i.e., $\widetilde{y} = y$.
In contrast to the formulation~\eqref{eqn:twoField:opMatrix3}, however, the pH-PDAE~\eqref{eqn:twoField:opMatrix4} only requires the usual regularity assumptions, namely~$\dot{u}\in L^2(\TT;\cHV)$.
\end{remark}
\begin{remark}
\label{rem:Dirichlet}
Also in the case of inhomogeneous Dirichlet boundary conditions, i.e., $\hat u_b\neq0$ or~$\hat p_b\neq0$ in~\eqref{eqn:poro:strong:c} and~\eqref{eqn:poro:strong:d}, we can obtain a pH formulation that explicitly encodes the boundary conditions. In more detail, following \cite{Alt15,Sim00}, we can use the trace-operator to write the boundary condition as an operator equation, which is then coupled to the original system via a Lagrange multiplier. To retain the pH structure, we have to choose a negative sign for the Lagrange multiplier and add the derivative of the boundary operator equation for the displacement $u$. From an DAE perspective, the latter corresponds to an {\em index reduction}. 
Denoting the respective trace operators by~$\calB_u$ and $\calB_p$, and $u_b$, $p_b$ defined analogously to $f$, $g$ as functionals acting on appropriate test functions, the pH formulation~\eqref{eqn:twoField:opMatrix3} extends to 
\begin{align*}
\begin{bmatrix}\calY & 0 & 0 & 0 & 0\ \ \\ 0 & \calA & 0 & 0 & 0\ \ \\ 0 & 0 & \calM & 0 & 0\ \ \\ 0 & 0 & 0 & 0 & 0\ \ \\ 0 & 0 & 0 & 0 & 0\ \ \end{bmatrix}
\begin{bmatrix} \dot{w} \\\dot{u}\\ \dot{p} \\ \dot{\lambda}_u \\ \dot{\lambda}_p \end{bmatrix}
= \begin{bmatrix} 0 & -\calA & \calD^* & \calB_u^* & 0 \\ \calA & 0 & 0 & 0 & 0 \\ -\calD & 0 & -\calK & 0 & \calB_p^* \\ -\calB_u & 0 & 0 & 0 & 0 \\ 0 & 0 & -\calB_p & 0 & 0 \end{bmatrix}
\begin{bmatrix} w\\ u\\ p\\ \lambda_u\\ \lambda_p \end{bmatrix}  + \begin{bmatrix} f\\ 0 \\ g \\ \dot u_b \\ p_b \end{bmatrix},
\end{align*}
With this choice, the additional terms for the boundary conditions enter the skew-symmetric operator matrix~$\mathcal{J}$. Moreover, the Dirichlet data acts as additional input thus allowing control via the boundary. 
\end{remark}
%
%
%%%%%%%%%%%%%%%%%%%%%%%%%%%%%%
\subsection{PH formulation of the quasi-static case}\label{sec:PH:quasiStatic}
First, we observe that the quasi-static case, i.e., $\rho \ddot u=0$, is included in the pH formulation~\eqref{eqn:twoField:opMatrix3}, by setting $\calY=0$.
It is worth noting that this strategy also works for the corresponding three-field formulation of poroelasticity, cf.~\cite{Biot41}. This formulation considers a mixed (or partitioned) version of the operator~$\calK$ with the fluid flux (or Darcy velocity) as an additional variable. 

Recall that the first-order formulation~\eqref{eqn:twoField:opMatrix3} results from introducing the derivative of~$u$ as an additional variable. If we directly start with the quasi-static case~\eqref{eqn:twoFieldQS} and do not introduce a new variable for the derivative of $u$, then we can obtain another pH-PDAE formulation.
We have already seen that one may eliminate the variable~$u$, which leads to the parabolic equation~\eqref{parabolic}. Also this has a pH structure but does not cover the full information due to the missing displacement variable. To reveal the structure of the entire system, we consider a reformulation. Since $\calK$ is invertible by assumption, we may introduce a new variable $q$ via
\begin{equation}
	\label{eqn:twoField:QS:additionalVariable}
 	\calK q =\calD u + \calM p.
\end{equation}
Assuming sufficient regularity of the given data, it is easy to see that the pairs $(u,p)$ and $(u,q)$ are equivalent in the sense that one can convert them into each other by
\begin{equation*}
  \begin{bmatrix} u\\ q \end{bmatrix}
  = \begin{bmatrix} \Id & 0\\ \calK^{-1}\calD & \calK^{-1}\calM \end{bmatrix}
  \begin{bmatrix} u \\ p \end{bmatrix}, \qquad
  \begin{bmatrix} u\\ p \end{bmatrix}
  = \begin{bmatrix} \Id & 0\\ -\calM^{-1}\calD & \calM^{-1}\calK \end{bmatrix}
  \begin{bmatrix} u \\ q \end{bmatrix}.
\end{equation*}
With this equivalence, we can write the quasi-static formulation~\eqref{eqn:twoFieldQS} equivalently in terms of the new variable, as
\begin{equation}
\label{eqn:twoField:matrix:ur}
  \begin{bmatrix} 0 & 0\\ 0 & \calK \end{bmatrix}
  \begin{bmatrix} \dot{u}\\ \dot{r} \end{bmatrix}
  = \begin{bmatrix} -\calA - \calD^*\calM^{-1}\calD & \calD^*\calM^{-1}\calK\\
     \calK\calM^{-1}\calD & -\calK\calM^{-1}\calK \end{bmatrix}
  \begin{bmatrix} u\\ r\end{bmatrix}
  + \begin{bmatrix} f\\ g \end{bmatrix}.
\end{equation}
The operator-matrix on the right-hand side is self-adjoint and negative semi-definite revealing that~\eqref{eqn:twoField:matrix:ur} is already in pH form. Note, however, that the operator~$\calK \calM^{-1}\calK$ occurs on the right-hand side, which requires strong spatial regularity and is thus not favorable. Instead, we observe that the system matrix in~\eqref{eqn:twoField:matrix:ur} is a Schur complement, i.e., we have the identity
\begin{equation*}
  \begin{bmatrix} -\calA & 0 & \calD^*\\ 0 & 0 & -\calK\\ -\calD & \calK & -\calM \end{bmatrix}
  \begin{bmatrix} \Id & 0 & 0\\ 0 & \Id & 0 \\
  -\calM^{-1}\calD & \calM^{-1}\calK & \Id \end{bmatrix}
  = \begin{bmatrix} -\calA - \calD^*\calM^{-1}\calD & \calD^*\calM^{-1}\calK & \calD^*\\
  \calK \calM^{-1}\calD & -\calK \calM^{-1}\calK & -\calK\\
  0 & 0 & -\calM \end{bmatrix}.
\end{equation*}
Hence, if we combine~\eqref{eqn:twoField:QS:additionalVariable} and~\eqref{eqn:twoField:matrix:ur} and rearrange the equations we obtain the extended system
\begin{equation}
\label{eqn:twoField:poroPH}
  \begin{bmatrix} 0 & 0 & 0\\  0 & 0 & 0\\  0 & 0 & \calK \end{bmatrix}
  \begin{bmatrix} \dot{u} \\ \dot{p}\\ \dot{q} \end{bmatrix}
  = \left(\begin{bmatrix} 0 &  \phantom{-}\calD^* & 0\\ -\calD & 0 & \calK\\ 0 & -\calK & 0 \end{bmatrix}
  - \begin{bmatrix} \calA & 0 & 0\\ 0 & \calM & 0\\ 0 & 0 & 0 \end{bmatrix}\right)
  \begin{bmatrix} u\\ p\\ q \end{bmatrix}
  + \begin{bmatrix} f\\ 0 \\ g\end{bmatrix}.
\end{equation}
We emphasize that this gives a pH-PDAE in the spirit of \Cref{def:pH-DAE} with Hamiltonian
\begin{equation}	
	\label{eqn:energyTwoField}
	\widehat{\mathcal{H}}(u,p,q)
	\vcentcolon= \frac{1}{2}\, \langle \calK q,q\rangle
	= \frac{1}{2}\, \langle\calM p + \calD u, q\rangle.
%	= \frac 12 \| \calD u + \calM p\|_{\calK^{-1}}^2.
\end{equation}
This can be verified immediately by noting that the operator matrices on the left and the second on the right are self-adjoint and positive semi-definite. Further, the first operator matrix on the right is skew-adjoint. Let us emphasize that the formulation~\eqref{eqn:twoField:poroPH} is not a pH-PDAE in the sense of our definition if $\calK$ is not self-adjoint.
\begin{remark}
	The pH-PDAE formulation~\eqref{eqn:twoField:poroPH} may also be obtained directly by adding the equation~\eqref{eqn:twoField:QS:additionalVariable} to the quasi-static formulation~\eqref{eqn:twoFieldQS} and using the time derivative of this equation to replace $\calD\dot{u} +\calM \dot{p}$ by $\calK \dot{q}$.
\end{remark}
%
%
%%%%%%%%%%%%%%%%%%%%%%%%%%%%%%
\subsection{PH formulation of the network case}\label{sec:PH:network}
In \Cref{sec:model:network} we have seen that the multi-field network case can be formulated in the same way as the two-field model if the operators are adjusted accordingly. More precisely, we simply need to replace the operators~$\calK$, $\calM$, and~$\calD$ by the operator matrices~$\calKK{B}$, $\calMM$, and~$\calDD$, respectively. By \Cref{ass:smallExchangeRates} we know that~$\calKK{B}$ is elliptic.
As a result, the pH formulation of~\eqref{eqn:network:opMatrix} is given by
\begin{equation}
\label{eqn:network:pH}
\begin{bmatrix}\calY & 0 & 0\\ 0 & \calA & 0 \\ 0 & 0 & \calMM\end{bmatrix}
\begin{bmatrix} \dot{w} \\\dot{u}\\ \dot{\mathbf{p}} \end{bmatrix}
= \begin{bmatrix} 0 & -\calA & \calDD^* \\ \calA & 0 & 0 \\ -\calDD & 0 & -\calKK{B} \end{bmatrix}
\begin{bmatrix} w\\ u\\ \mathbf{p} \end{bmatrix}  + \begin{bmatrix} f\\ 0 \\ \mathbf{g} \end{bmatrix}.
\end{equation}
As before, we may reinterpret the right-hand sides as external inputs, leading to the power-conjugated output consisting of~$w$ and~$\mathbf{p}$.
The corresponding Hamiltonian has the form
\begin{align*}
\mathcal{H}(w,u,\mathbf{p})
&\vcentcolon= \frac{1}{2}\, \Big(\langle \calY w, w\rangle + \langle \calA u, u\rangle + \langle\calMM \mathbf{p}, \mathbf{p}\rangle\Big)\\
&\phantom{\vcentcolon}= \frac{1}{2}\, \Big(\langle \calY w, w\rangle + \langle \calA u, u\rangle + \sum_{i=1}^m \langle\calM p_i, p_i\rangle\Big).
\end{align*}

Finally, the pH formulation of the quasi-static network case immediately follows by setting~$\calY=0$. %Alternatively, we can use the pH formulation for the quasi-static case derived in \Cref{sec:PH:quasiStatic}. Since the operator $\calKK{B}$ appears on the left-hand side, however, this formulation is only valid for a symmetric network coupling matrix $B$.
%
%
%%%%%%%%%%%%%%%%%%%%%%%%%%%%%%
\subsection{DAE structure and index}\label{sec:PH:DAE}
A spatial discretization of the operator equations~\eqref{eqn:twoField:opMatrix3}, \eqref{eqn:twoField:opMatrix4}, or \eqref{eqn:twoField:poroPH}, e.g., by the finite element method, yields a (finite-dimensional) pH-DAE. However, it is interesting to note that the DAE structure and, in particular, the \emph{differentiation index} vary in the different formulations. Recall, see e.g.~\cite{BreCP96,KunM06}, that a constant coefficient DAE $E\dot{z}= Az+ k$ with a regular pair $(E,A)$ (i.e.~$\det (\lambda E-A)$ is not identically zero) has differentiation index $\nu=0$ if $E$ is invertible. It has differentiation index one if $W^TAV$ is invertible, where $V$ is a matrix that spans the kernel of $E$ and $W$ is a matrix that spans the kernel of $E^T$. Otherwise, it has differentiation index~$\nu\ge 2$.

The spatial discretization of \eqref{eqn:twoField:opMatrix3} with discretization parameter $h$ yields the pH-DAE
\begin{equation}
	\label{eqn:twoField:opMatrix3:discrete}
	\begin{bmatrix}
		M_{\calY} & 0 & 0\\
		0 & K_{\mathcal{A}} & 0\\
		0 & 0 & M_{\mathcal{M}}
	\end{bmatrix}\begin{bmatrix}
		\dot{w}_h\\
		\dot{u}_h\\
		\dot{p}_h
	\end{bmatrix} = \begin{bmatrix}
		0 & -K_{\mathcal{A}} & D^T\\
		K_{\mathcal{A}} & 0 & 0\\
		-D & 0 & -K_{\mathcal{K}}
	\end{bmatrix}\begin{bmatrix}
		w_h\\
		u_h\\
		p_h
	\end{bmatrix} + \begin{bmatrix}
		f_h\\
		0\\
		g_h
	\end{bmatrix}.
\end{equation}
Here, $K_{\mathcal{A}}$ and $K_{\mathcal{K}}$ denote the stiffness matrices corresponding to the operators $\mathcal{A}$ and $\mathcal{K}$, respectively, and $M_{\calY}$, $M_{\mathcal{M}}$ are the mass matrices corresponding to $\calY$ and $\mathcal{M}$. Since the finite element method is a Galerkin projection, we immediately observe that these matrices are symmetric. Under reasonable assumptions on the spatial discretization, the matrices are even positive definite (in the case $\rho> 0$).

If $\rho > 0$, then~\eqref{eqn:twoField:opMatrix3:discrete} is an implicit equation with nonsingular matrix on the left-hand side. Thus, it is a pH-DAE of differentiation-index $\nu=0$. If~$\rho \ddot u=0$, then the operator $\calY$ and the corresponding mass matrix $M_{\calY}$ are zero, i.e., the space-discretized system~\eqref{eqn:twoField:opMatrix3:discrete} reads
\begin{subequations}
	\label{eqn:twoField:opMatrix3:discrete:descriptor}
\begin{align}
	\label{eqn:twoField:opMatrix3:discrete:DAE}
	\begin{bmatrix}
		0 & 0 & 0\\
		0 & K_{\mathcal{A}} & 0\\
		0 & 0 & M_{\mathcal{M}}
	\end{bmatrix}\begin{bmatrix}
		\dot{w}_h\\
		\dot{u}_h\\
		\dot{p}_h
	\end{bmatrix} &= \begin{bmatrix}
		0 & -K_{\mathcal{A}} & D^T\\
		K_{\mathcal{A}} & 0 & 0\\
		-D & 0 & -K_{\mathcal{K}}
	\end{bmatrix}\begin{bmatrix}
		w_h\\
		u_h\\
		p_h
	\end{bmatrix} + \begin{bmatrix}
		M_u & 0\\
		0 & 0\\
		0 & M_p
	\end{bmatrix}\begin{bmatrix}
		\inpVar_{u,h}\\
		\inpVar_{p,h}
	\end{bmatrix},\\
	\begin{bmatrix}
		\outVar_{u,h}\\
		\outVar_{p,h}
	\end{bmatrix} &= \begin{bmatrix}
		M_u & 0 & 0\\
		0 & 0 & M_p
	\end{bmatrix}\begin{bmatrix}
		w_h\\
		u_h\\
		p_h
	\end{bmatrix},
\end{align}
\end{subequations}
where we have interpreted the external forcing as control variables and, as before, added the power-conjugated output equation. As before, the mass matrices $M_u$ and $M_p$ are assumed to be symmetric positive definite. Since the upper-left block in the matrix on the right-hand side of \eqref{eqn:twoField:opMatrix3:discrete:DAE} is zero, the pH-DAE cannot be of index $\nu=1$. Multiplication of the second and third equation with $K_{\mathcal{A}}^{-1}$ and $M_{\mathcal{M}}^{-1}$, rearranging the equations and variables details that~\eqref{eqn:twoField:opMatrix3:discrete:DAE} is in Hessenberg form, see \cite{BreCP96}. We immediately conclude that~\eqref{eqn:twoField:opMatrix3:discrete:DAE} has differentiation index $\nu=2$. The constraint equation $K_{\mathcal{A}}u_h = D^Tp_h + f_h$ is thus complemented with the \emph{hidden constraint}, i.e., the constraint that arises from a linear combination of the original equations and their time derivatives. In the present case, we obtain with~\eqref{eqn:twoField:opMatrix3:discrete:DAE} the hidden constraint 
\begin{equation*}
	(K_{\mathcal{A}} + D^TM_{\mathcal{M}}^{-1}D)w_h + D^TM_{\mathcal{M}}^{-1}K_{\mathcal{K}}p_h + \dot{f}_h = D^TM_{\mathcal{M}}^{-1}g_h.
\end{equation*}
As a direct consequence, initial values $w_{h}^0$ and $u_{h}^0$ for $w_h$ and $u_h$, respectively, have to satisfy the consistency conditions
\begin{align*}
	K_{\mathcal{A}}u_{h}^0 &= D^Tp_{h}^0 + f_h(0),\\
	(K_{\mathcal{A}} + D^TM_{\mathcal{M}}^{-1}D)w_{h}^0 &= -D^TM_{\mathcal{M}}^{-1}K_{\mathcal{K}}p_{h}^0 - \dot{f}_h(0) + D^TM_{\mathcal{M}}^{-1}g_h(0),
\end{align*}
showing that it is sufficient to prescribe an initial value $p_{h}^0$ for $p_h$. We emphasize that this is not specific to the discretized pH-DAE, but also applies to the pH-PDAE~\eqref{eqn:twoField:opMatrix3} with $\mathcal Y = 0$. In this case, the initial values $w^0$ and $u^0$ are given implicitly by
\begin{align}
	\label{eqn:consistencyCondition:u}
	\calA u^0 &= \calD^*p^0 + f(0),\\
	\label{eqn:consistencyCondition:w}
	(\calA + \calD^*\calM^{-1}\calD) w^0 &= -\calD^*\calM^{-1}\calK p^0 - \dot{f}(0) + \calD^*\calM^{-1}g(0).
\end{align}
\begin{remark}
Although the semi-discrete pH-DAE~\eqref{eqn:twoField:opMatrix3:discrete:descriptor} has differentiation index $\nu=2$, the index can be reduced to $\nu=1$ by output feedback. This is a standard procedure in DAE control to avoid impulsive solutions when the chosen input functions are not sufficiently smooth, see e.g.~\cite{BunBMN99,Dai89}. 
In more detail, consider a feedback
	\begin{displaymath}
		\begin{bmatrix}
			\inpVar_{u,h}\\
			\inpVar_{p,h}
		\end{bmatrix} = \begin{bmatrix}
			F_{11} & 0\\
			0 & 0
		\end{bmatrix}\begin{bmatrix}
			\outVar_{u,h}\\
			\outVar_{p,h}
		\end{bmatrix} + \begin{bmatrix}
			\tilde{\inpVar}_{u,h}\\
			\tilde{\inpVar}_{p,h}
		\end{bmatrix}.
	\end{displaymath}
	The resulting closed-loop system is given by
	\begin{equation}
		\label{eqn:twoField:opMatrix3:discrete:DAE:closedLoop}
		\begin{bmatrix}
		0 & 0 & 0\\
		0 & K_{\mathcal{A}} & 0\\
		0 & 0 & M_{\mathcal{M}}
	\end{bmatrix}\begin{bmatrix}
		\dot{w}_h\\
		\dot{u}_h\\
		\dot{p}_h
	\end{bmatrix} = \begin{bmatrix}
		M_u^T F_{11} M_u & -K_{\mathcal{A}} & D^T\\
		K_{\mathcal{A}} & 0 & 0\\
		-D & 0 & -K_{\mathcal{K}}
	\end{bmatrix}\begin{bmatrix}
		w_h\\
		u_h\\
		p_h
	\end{bmatrix} + \begin{bmatrix}
		\mathrm{Id} & 0\\
		0 & 0\\
		0 & \mathrm{Id}
	\end{bmatrix}\begin{bmatrix}
		\tilde{\inpVar}_{u,h}\\
		\tilde{\inpVar}_{p,h}
	\end{bmatrix},
	\end{equation}
	showing that~\eqref{eqn:twoField:opMatrix3:discrete:DAE:closedLoop} has differentiation index $\nu=1$, whenever $F_{11}$ is nonsingular, and has pH structure, whenever the symmetric part of~$F_{11}$ is negative semi-definite.
\end{remark}
If one does not introduce the  new variable $w$, then it has been shown in \cite{AltMU20} that the spatial discretization of the original quasi-static formulation~\eqref{eqn:twoFieldQS} results in a system of differentiation index~$\nu=1$. We emphasize that the original quasi-static formulation~\eqref{eqn:twoFieldQS} only encodes the consistency condition~\eqref{eqn:consistencyCondition:u} for $u$ but not the consistency condition~\eqref{eqn:consistencyCondition:w} for~$w$ (respectively $\dot{u}$).

\begin{remark}
\label{rem:high order formulation}
If the inhomogeneity $f$ is sufficiently smooth in time, then there is also another pH-PDAE like formulation. Going back to the original second-order model~\eqref{eqn:twoField:opMatrix} and differentiating the first equation with respect to time yields the third-order (in time) equation
\begin{subequations}
\label{eqn:twoField:operator3rd}
\begin{alignat}{6}
\calY u^{(3)}&\ +\ &\calA \dot{u} &\ -\ &\calD^* \dot{p} & & &\ =\ &\dot{f} &&\qquad \text{in } \V^*, \label{eqn:twoField:operator3rd:a}\\
 & & \calD \dot{u} &\ +\ & \calM\dot{p} &\ +\ &\calK p &\ =\ &g &&\qquad \text{in } \Q^*. \label{eqn:twoField:operator3rd:b}
\end{alignat}
\end{subequations}
This is a formulation where all coefficients are positive semi-definite, except for one which has a positive semi-definite symmetric part. This is the structure of the higher-order dissipative Hamiltonian systems  discussed in \cite{MehMW20}. In principle, we could work directly with this formulation and do not transform to first order, but we will not discuss this formulation further here.
\end{remark}
%
%
%=============================================================================
%=========  Interconnections
%=============================================================================
\section{Interconnection of subsystems}\label{sec:interconn}
It has been shown in \cite{MehM19}, see also \Cref{sec:PH}, that the (energy-preserving or dissipative) interconnection of pH-DAEs is again a pH-DAE. In the following, we show how the presented pH-PDAE formulations can be obtained via the interconnection of subsystems  via a suitable output feedback relation.
%
%
%%%%%%%%%%%%%%%%%%%%%%%%%%%%%%
\subsection{Interconnection in the two-field case}\label{sec:intercon:2field}
In the two-field case, we construct an interconnection of a hyperbolic (or elliptic) and a parabolic equation. We treat the two cases~$\rho \ddot u\neq 0$ and $\rho \ddot u=0$ together.
First, consider the system
\begin{equation}
	\label{eqn:hyperbolicPH}
	\calY \ddot u = -\calA u + f,
\end{equation}
which, by going to the first-order formulation and adding an output equation, can be written as
\begin{align}
\label{eqn:twoFieldhyperbolic}
\begin{bmatrix}\calY & 0 \\ 0 & \calA  \end{bmatrix}
  \begin{bmatrix} \dot{w} \\\dot{u} \end{bmatrix}
  = \begin{bmatrix} 0 & -\calA  \\ \calA & 0 \end{bmatrix}
  \begin{bmatrix} w\\ u\end{bmatrix}  + \begin{bmatrix} \inpVar_{u} \\ 0 \end{bmatrix},\qquad
  \outVar_u
  = \begin{bmatrix}  \Id & 0 \end{bmatrix}
  \begin{bmatrix}  w\\ u \end{bmatrix}
  = w.
\end{align}
For the system in $p$ we consider the (parabolic) system
\begin{equation}
\label{eqn:twoFieldparabolic}
\calM \dot{p} = -\calK p +\inpVar_p, \qquad \outVar_p=p.
\end{equation}
Both systems have a pH structure in the spirit of \Cref{def:pH-DAE} with Hamiltonians
\begin{equation}
\label{eqn:energyhyp}
  \mathcal{H}_\text{u}(w,u)
  \vcentcolon= \frac{1}{2}\, \Big(\langle \calY w, w\rangle+ \langle \calA u, u\rangle\Big)\qquad\text{and}\qquad
  \mathcal{H}_\text{p}(p)
  \vcentcolon= \frac{1}{2}\, \langle \calM p, p\rangle.
\end{equation}
\begin{theorem}
	\label{thm:phcoupling}
	The pH-PDAE~\eqref{eqn:twoField:opMatrix3} is obtained from \eqref{eqn:twoFieldhyperbolic} and \eqref{eqn:twoFieldparabolic} via the output feedback
	\begin{equation*}
		%\label{eqn:twoField:interconnection}
		\begin{bmatrix}
			\inpVar_u \\ \inpVar_p
		\end{bmatrix} = \begin{bmatrix}
			0  &  \calD^*\\
			-\calD  & 0
		\end{bmatrix}\begin{bmatrix}
			\outVar_u\\ \outVar_p
		\end{bmatrix}
+  \begin{bmatrix} f \\ g\end{bmatrix}.
	\end{equation*}
The Hamiltonian~\eqref{eqn:energyfull}  of the coupled system is the sum of the Hamiltonians in~\eqref{eqn:energyhyp}.
\end{theorem}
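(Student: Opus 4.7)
The plan is a direct verification by substitution, followed by a structural check. First I would eliminate the input ports in \eqref{eqn:twoFieldhyperbolic} and \eqref{eqn:twoFieldparabolic} using the prescribed feedback together with $\outVar_u = w$ and $\outVar_p = p$, obtaining $\inpVar_u = \calD^{*} p + f$ and $\inpVar_p = -\calD w + g$. Plugging these back into the two subsystems yields the three scalar equations
\begin{align*}
\calY \dot{w} &= -\calA u + \calD^{*} p + f, \\
\calA \dot{u} &= \calA w, \\
\calM \dot{p} &= -\calD w - \calK p + g,
\end{align*}
which, stacked into a single block operator equation for the aggregated state $(w,u,p)^T$, reproduce \eqref{eqn:twoField:opMatrix3} row by row.

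Next I would argue that this is really a power-conserving interconnection in the sense of \eqref{eqn:interconnectedSystem}, so that the aggregate is again a pH-PDAE. The relevant feedback operator is
\begin{equation*}
\mathcal F = \begin{bmatrix} 0 & \calD^{*} \\ -\calD & 0 \end{bmatrix},
\end{equation*}
and its skew-adjointness with respect to the natural duality pairing between the aggregated input and output spaces follows from the very definition of the dual operator, $\langle \calD^{*} q, v\rangle = \langle q, \calD v\rangle$ for $v\in\V$, $q\in\Q$. Hence the symmetric part of $\mathcal F$ vanishes, no spurious dissipation is added to the $\calR$-block, and the pH structure of the subsystems (together with the external forcings $f$ and $g$ playing the role of the new inputs) is inherited by the coupled system.

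Finally, since the state spaces of the two subsystems are disjoint and each Hamiltonian in \eqref{eqn:energyhyp} depends only on its own components, the aggregate Hamiltonian is by construction $\mathcal H_u(w,u) + \mathcal H_p(p)$; writing out the quadratic forms, this coincides with \eqref{eqn:energyfull}.

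There is essentially no hard step here: the entire argument is block-matrix bookkeeping. The one point worth being explicit about is the skew-adjointness of $\mathcal F$ in the correct pairing between the spaces of $\outVar$ and $\inpVar$, because this is what guarantees that the $GFG^{T}$ contribution in \eqref{eqn:interconnectedSystem} does not spoil the positive semi-definiteness of the dissipation block and hence keeps the coupled system inside the pH class.
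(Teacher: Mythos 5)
Your proposal is correct and follows essentially the same route the paper intends: the paper states \Cref{thm:phcoupling} without a written proof, relying on the direct substitution of the feedback into \eqref{eqn:twoFieldhyperbolic}--\eqref{eqn:twoFieldparabolic} and on the general power-conserving interconnection result of \Cref{sec:PH:ODEs} (where $F_{\mathrm{sym}}=0$ makes the condition on $R-GF_{\mathrm{sym}}G^T$ trivial), which is exactly what you spell out. Your explicit check of the skew-adjointness of the coupling operator and of the additivity of the Hamiltonians is the right bookkeeping and matches \eqref{eqn:twoField:opMatrix3} and \eqref{eqn:energyfull} row by row.
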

%
%%%%%%%%%%%%%%%%%%%%%%%%%%%%%%
\subsection{Interconnection for the alternative quasi-static case}\label{sec:intercon:2fieldQS}
If we are solely interested in the quasi-static case, i.e., $\rho \ddot u=0$ and $\calY = 0$, then~\eqref{eqn:hyperbolicPH} reduces to the elliptic equation
\begin{equation}
	\label{eqn:ellipticPH}
	 -\calA u + \inpVar_{u}=0.
\end{equation}
In this case, we may choose the output as~$\outVar_u=u$. In terms of \Cref{def:pH-DAE}, respectively \Cref{lem:pHequivalent}, we have~$\calR = \calA$, $\calB = \Id$, and zero operators otherwise. The associated Hamiltonian in this case is constant in time and we may for simplicity choose $\mathcal{H}_\text{u}(u) = 0$.

To obtain the alternative pH formulation presented in \Cref{sec:PH:quasiStatic}, we consider for $p$ the last two equations in \eqref{eqn:twoField:poroPH}, given by
\begin{subequations}
	\label{eqn:parabolicPH}
	\begin{align}
 	\begin{bmatrix}
 		0 & 0\\
 		0 & \calK
 	\end{bmatrix}\begin{bmatrix}
 		\dot{p}\\\dot{q}
 	\end{bmatrix} &= \left(\begin{bmatrix}
 		0 & \calK\\
 		-\calK & 0
 	\end{bmatrix} - \begin{bmatrix}
 		\calM & 0\\
 		0 & 0
 	\end{bmatrix}\right)\begin{bmatrix}
 		p\\ q
 	\end{bmatrix} + \begin{bmatrix}
 		\inpVar_{p}\\
 		\inpVar_{q}
 	\end{bmatrix},\\
 	\begin{bmatrix}
 		\outVar_{p}\\
 		\outVar_{q}
 	\end{bmatrix} &= \begin{bmatrix}
 		p\\ q
 	\end{bmatrix}
	\end{align}
\end{subequations}
with associated Hamiltonian $\mathcal H_p(p,q) = \tfrac{1}{2} \langle \calK q,q\rangle$. Choosing a suitable output feedback for $\inpVar_{u}$, $\inpVar_p$, $\inpVar_p$, we recover \eqref{eqn:twoField:poroPH} as stated in the following immediate result.
\begin{theorem}
	\label{thm:phcoupling:QS}
	The alternative pH formulation~\eqref{eqn:twoField:poroPH} for the quasi-static model~\eqref{eqn:twoFieldQS} is obtained by the output feedback
	\begin{equation*}
		%\label{eqn:twoField:interconnection:QS}
		\begin{bmatrix}
			\inpVar_{u} \\  \inpVar_{p}\\ \inpVar_{q}
		\end{bmatrix} = \begin{bmatrix}
			 0 &  \calD^*& 0\\
                -\calD  & 0 & 0\\
			0 &0 & 0
		\end{bmatrix}\begin{bmatrix}
			\outVar_{u}\\\outVar_{p}\\\outVar_{q}
		\end{bmatrix}
+  \begin{bmatrix}  f \\ 0 \\ g\end{bmatrix}.
	\end{equation*}
	The Hamiltonian~\eqref{eqn:energyTwoField} of the coupled system is the sum of the Hamiltonians of the subsystems.
\end{theorem}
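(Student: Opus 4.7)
The plan is to verify the claim by direct substitution, following the interconnection calculus discussed around \eqref{eqn:interconnectedSystem}. First, I would assemble \eqref{eqn:ellipticPH} (with $\outVar_u = u$ and zero Hamiltonian) together with \eqref{eqn:parabolicPH} into a single block-diagonal pH-PDAE on the aggregated state $(u,p,q)$, namely
\begin{equation*}
\begin{bmatrix} 0 & 0 & 0\\ 0 & 0 & 0\\ 0 & 0 & \calK \end{bmatrix}
\begin{bmatrix}\dot{u}\\ \dot{p}\\ \dot{q}\end{bmatrix}
= \left(\begin{bmatrix} 0 & 0 & 0\\ 0 & 0 & \calK\\ 0 & -\calK & 0\end{bmatrix}
- \begin{bmatrix} \calA & 0 & 0\\ 0 & \calM & 0\\ 0 & 0 & 0\end{bmatrix}\right)
\begin{bmatrix} u\\ p\\ q \end{bmatrix}
+ \begin{bmatrix} \inpVar_u\\ \inpVar_p\\ \inpVar_q\end{bmatrix},
\end{equation*}
with input operator $\mathcal{B}=\Id$ and output $(\outVar_u,\outVar_p,\outVar_q)=(u,p,q)$. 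The sum of the two subsystem Hamiltonians is $\mathcal{H}_u(u) + \mathcal{H}_p(p,q) = \tfrac{1}{2}\langle \calK q,q\rangle$, which coincides with $\widehat{\mathcal{H}}$ from \eqref{eqn:energyTwoField}; this already settles the assertion on the additivity of the Hamiltonians.

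Next, I would substitute the prescribed feedback $\inpVar = F\outVar + (f,0,g)^T$ into this aggregated system. Because $\mathcal{B}=\Id$, the closed-loop system operator reduces to $\mathcal{J} + F - \mathcal{R}$. The matrix $F$ inserts $\calD^*$ into the $(1,2)$ block and $-\calD$ into the $(2,1)$ block while leaving the remaining entries untouched, and the external term $(f,0,g)^T$ passes through the identity input map without modification. A direct row-by-row comparison then yields exactly \eqref{eqn:twoField:poroPH}: the first row gives $-\calA u + \calD^* p + f = 0$, the second row gives $-\calD u - \calM p + \calK q = 0$, and the third row gives $\calK\dot{q} = -\calK p + g$.

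Finally, I would verify that the closed-loop system is again a pH-PDAE. Writing $F = F_{\mathrm{skew}} + F_{\mathrm{sym}}$, the off-diagonal placement of $\calD^*$ and $-\calD$ together with the definition of $\calD^*$ as the dual of $\calD$ yields $F_{\mathrm{sym}} = 0$. By the criterion recorded after \eqref{eqn:interconnectedSystem}, the interconnected system therefore retains the pH structure in the sense of \Cref{def:pH-DAE}, with unaltered dissipation operator $\mathrm{diag}(\calA,\calM,0)$, which is self-adjoint and positive semi-definite. The whole argument is a formal verification, so I do not expect a genuine obstacle; the only conceptual point is recognizing $F$ as skew-adjoint, which is precisely what guarantees simultaneously the additivity of the Hamiltonians and the preservation of the pH structure upon interconnection.
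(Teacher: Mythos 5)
Your proposal is correct and follows essentially the same route as the paper, which presents \Cref{thm:phcoupling:QS} as an immediate consequence of substituting the stated feedback into the block-diagonal aggregation of \eqref{eqn:ellipticPH} and \eqref{eqn:parabolicPH}. The row-by-row verification, the identification of the summed Hamiltonian with \eqref{eqn:energyTwoField}, and the observation that $F$ is skew-adjoint (so the dissipation operator $\mathrm{diag}(\calA,\calM,0)$ is unchanged) are exactly the checks the paper leaves implicit.
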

%
%
%%%%%%%%%%%%%%%%%%%%%%%%%%%%%%
\subsection{Interconnection in the network case}
\label{sec:intercon:network}
Clearly, we obtain all the different pH-PDAE formulations similarly as in the two-field case, since the multi-field network structure is precisely the same as that of the two-field system, provided that~\Cref{ass:smallExchangeRates} holds, i.e., we have small exchange rates. To perform the interconnection via output feedback, replace the parabolic equation~\eqref{eqn:twoFieldparabolic} with
\begin{equation}
\label{eqn:network:parabolicrho}
\calM\dot{p}_i=-\calK_i p_i + \inpVar_{p_i},\qquad \outVar_{p_i}=  p_i
\end{equation}
for $i=1,\ldots,m$ with associated Hamiltonians $\mathcal{H}_i(p_i) \vcentcolon= \tfrac{1}{2} \langle \calM p_i, p_i\rangle$. Collecting the input and output variables in $\mathbf{v}_{\mathbf{p}} = [v_{p_1};\ldots;v_{p_m}]$ and $\mathbf{v}_{\mathbf{p}} = [v_{p_1};\ldots;v_{p_m}]$, respectively, the feedback interconnection is -- similar to \Cref{thm:phcoupling} -- given by
	\begin{align*}
		\begin{bmatrix}
			\inpVar_{u}\\
			\mathbf{\inpVar}_{\mathbf{p}}
		\end{bmatrix} = \left(\begin{bmatrix}
			0 & \calDD^*\\
			-\calDD & B_{\mathrm{skew}}\otimes\calI
		\end{bmatrix} + \begin{bmatrix}
			0 & 0\\
			0 & B_{\mathrm{sym}}\otimes\calI
		\end{bmatrix}\right)\begin{bmatrix}
			\outVar_u\\
			\mathbf{\outVar}_{\mathbf{p}}
		\end{bmatrix} + \begin{bmatrix}
			f\\
			\mathbf{g}
		\end{bmatrix}.
	\end{align*}
Note that~\Cref{ass:smallExchangeRates} ensures that the dissipation operator $\mathcal{R}$ remains positive semi-definite. In fact, \Cref{ass:smallExchangeRates} can be weakened in the sense that it is sufficient that the symmetric part~$B_{\mathrm{sym}}$ of the exchange rate matrix $B$ is small enough in the sense of positive semi-definite operators, cf.~\Cref{lem:smallExchangeRatesSymBeta}.

If the exchange rates are symmetric, i.e., $B = B_{\mathrm{sym}}$, then we can also use the alternative quasi-static pH formulation derived in \Cref{sec:PH:quasiStatic}. Since the exchange rates, encoded in the matrix $B$, then appear on the left-hand side in the pH formulation~\eqref{eqn:twoField:poroPH}, the pH formulation cannot be obtained directly via feedback interconnection. Instead, we can first couple the pressure equations~\eqref{eqn:network:parabolicrho}, then introduce the new variable $\mathbf{q}$ as in \Cref{sec:PH:quasiStatic}, and afterwards obtain the final form with the coupling from \Cref{thm:phcoupling:QS}. An illustration of the two pH formulations for the quasi-static network case is presented in \Cref{fig:subsystems}.
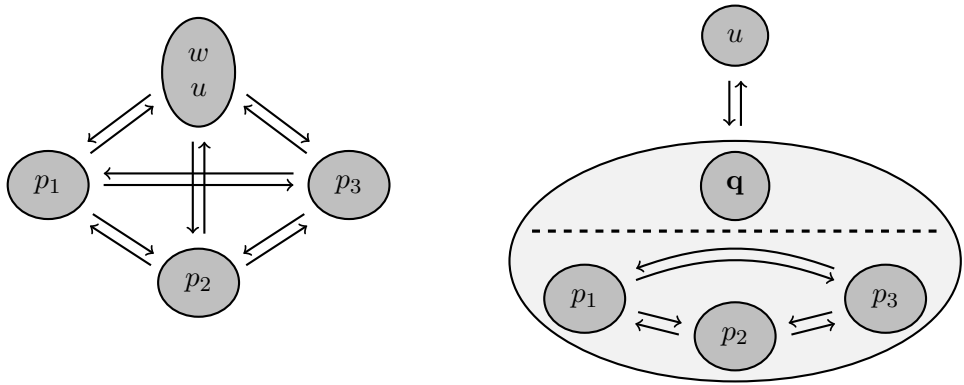
\begin{figure}
	\centering
	\begin{subfigure}[t]{.45\linewidth}
		\centering
		\begin{tikzpicture}
		\tikzstyle{myEllipse} = [ellipse,outer sep=.2cm,inner sep=0.2cm, draw, thick, fill=black!25]

		%\draw[fill=black!5,thick] (2.7,0) ellipse (5 and 2.5);
		\node (u) [myEllipse,align=center] at (0,0) {$w$\\$u$};
		\node (sys1) [myEllipse,align=center] at (-2,-1.5) {$p_1$};
		\node (sys2) [myEllipse,align=center] at (0,-2.8) {$p_2$};
		\node (sys3) [myEllipse,align=center] at (2,-1.5) {$p_3$};
		
		\draw[->, thick] (sys1) -- (sys2);
		\draw[->, thick, transform canvas={xshift=-.1em, yshift=-.4em}] (sys2) -- (sys1);
		
		\draw[->, thick] (sys3) -- (sys2);
		\draw[->, thick, transform canvas={xshift=.1em, yshift=-.4em}] (sys2) -- (sys3);
		
		\draw [->,thick] (sys1) to (sys3);
		\draw [->,thick,transform canvas={yshift=.4em}] (sys3) to (sys1);
		
		\draw[->, thick] (sys1) -- (u);
		\draw[->, thick, transform canvas={xshift=-.1em, yshift=.4em}] (u) -- (sys1);
		
		\draw[->, thick, transform canvas={xshift=0.2em}] (sys2) -- (u);
		\draw[->, thick, transform canvas={xshift=-0.2em}] (u) -- (sys2);
		
		\draw[->, thick] (sys3) -- (u);
		\draw[->, thick, transform canvas={xshift=.1em, yshift=.4em}] (u) -- (sys3);
		
		\node at (0,-4) {}; % shift of the image
		\end{tikzpicture}
		\caption{Coupling pattern based on \Cref{sec:PH:twoField} in the case~$\rho\ddot u=0$.}
		\label{fig:subsystems:feedback}
	\end{subfigure}\quad
	\begin{subfigure}[t]{.45\linewidth}
		\centering
		\begin{tikzpicture}
		\tikzstyle{myEllipse} = [ellipse,outer sep=.2cm,inner sep=0.2cm, draw, thick, fill=black!25]

		%\draw[fill=black!5,thick] (2.7,0) ellipse (5 and 2.5);
		
		%\draw[fill=black!5,thick] (-3,-4.6) rectangle (3,-1.4);
		%\draw[very thick,dashed] (-2.8,-2.6) -- (2.8,-2.6);
		
		\node (rpSys) [myEllipse,fill=black!5,minimum width=6cm,minimum height=3.2cm] at (0,-3) {};
		\draw[very thick,dashed] (-2.7,-2.6) -- (2.7,-2.6);
		
		\node (u) [myEllipse] at (0,0) {$u$};
		\node (sys1) [myEllipse] at (-2,-3.5)  {$p_1$};
		\node (sys2) [myEllipse] at (0,-4) {$p_2$};
		\node (sys3) [myEllipse] at (2,-3.5) {$p_3$};
		\node (r) [myEllipse] at (0,-2) {$\mathbf{q}$};
		
		\draw[->, thick] (sys1) -- (sys2);
		\draw[->, thick, transform canvas={xshift=-.1em, yshift=-.4em}] (sys2) -- (sys1);
		
		\draw[->, thick] (sys3) -- (sys2);
		\draw[->, thick, transform canvas={xshift=.1em, yshift=-.4em}] (sys2) -- (sys3);
		
		\draw [->,thick] (sys1) to [out=20,in=160] (sys3);
		\draw [->,thick,transform canvas={yshift=.4em}] (sys3) to [out=160,in=20] (sys1);
		
		\draw[->, thick, transform canvas={xshift=0.2em}] (rpSys) -- (u);
		\draw[->, thick, transform canvas={xshift=-0.2em}] (u) -- (rpSys);
		\end{tikzpicture}
		\caption{Coupling pattern based on the alternative formulation of \Cref{sec:PH:quasiStatic}.}
		\label{fig:subsystems:directFormulation}
	\end{subfigure}
	\caption{Illustration of different pH formulations via feedback interconnection for the quasi-static poroelastic network.}
	\label{fig:subsystems}
\end{figure}
%
%
%=============================================================================
%=========  Conclusions
%=============================================================================
\section{Summary}\label{sec:conclusion}
We have studied Biot's poroelasticity model from an energy-based perspective and introduced different pH formulations. % of the PDAE model.
Our operator matrices mimic the finite-dimensional definition of pH-DAEs such that a spatial discretization via finite elements yields a pH-DAE. To obtain the required properties, we have used the commonly omitted second-order term and performed a first-order reformulation. The quasi-static pH formulation is then obtained by formally setting the second-order term zero, which does not affect the pH structure.
The formulation as pH-PDAE is favorable because it automatically induces a dissipation inequality and is invariant under Galerkin projection, making our formulation amendable to structure-preserving discretization and model-order reduction. Besides, the pH formulation offers a system-theoretic interpretation of poroelastic network models. In particular, we have shown that the pH formulation of the network models can be obtained via output feedback interconnection of the different submodels with internal energies. We emphasize that such an interconnection is not directly possible with the quasi-static model typically studied in the literature, since the coupling of the subsystems requires the derivative of the displacement.
%
%
%=============================================================================
%=========  Acknowledgements
%=============================================================================
\section*{Acknowledgements}
The work of V. Mehrmann is supported by the Deutsche Forschungsgemeinschaft (DFG, German Research Foundation) CRC 910 \emph{Control of self-organizing nonlinear systems: Theoretical methods and concepts of application}, 163436311. B. Unger acknowledges funding from the DFG under Germany's Excellence Strategy -- EXC 2075 -- 390740016 and is thankful for support by the Stuttgart Center for Simulation Science (SimTech).

We thank Etienne Emmrich for very helpful discussions on the analytical properties of the system.
%
%
%=============================================================================
%=========  Bibs
%=============================================================================
\bibliographystyle{plain}
\bibliography{references}
\end{document}